\newcommand{\PR}{\mathbb{P}}
\newcommand{\C}{\mathcal{C}}
\newcommand{\D}{\Delta}
\newcommand{\e}{\varepsilon}
\newcommand{\De}{\D_{\e}}
\newcommand{\de}{d_{\e}}
\newcommand{\DLG}{\D_G^L}
\newcommand{\E}{\mathbb{E}}
\newcommand{\LL}{Lov\'{a}sz Local Lemma}
\newcommand{\Lceil}{\left\lceil}
\newcommand{\Rceil}{\right\rceil}
\newtheorem*{LAC}{The Linear Arboricity Conjecture (LAC)}
\newtheorem*{LAC'}{The Linear Arboricity Conjecture' (LAC')}
\newtheorem*{LLAC}{The List Linear Arboricity Conjecture (LLAC)}
\newtheorem*{LLL}{Lov\'{a}sz Local Lemma}
\newtheorem*{WLLL}{General Version of the Lov\'{a}sz Local Lemma}
\newtheorem*{CB}{Chernoff's Bound}
\newtheorem*{TI}{Talagrand's Inequality}
\newtheorem*{LCC}{The List Coloring Conjecture}
\newtheorem*{KAHN}{Kahn's Theorem}
\newtheorem{THM}{Theorem}[section]
\newtheorem{LEM}[THM]{Lemma}
\newtheorem{COR}[THM]{Corollary}
\newtheorem{PROP}[THM]{Proposition}
\begin{document}
\font\smallrm=cmr8

\baselineskip=12pt
\phantom{a}\vskip .25in
\centerline{{\bf  The List Linear Arboricity of Graphs}}
\vskip .3in
\centerline{{\bf Ringi Kim}%
\footnote{\texttt{rin@kaist.ac.kr}.}}
\smallskip
\centerline{Department of Mathematical Sciences}
\centerline{KAIST}
\centerline{Daejeon}
\centerline{South Korea 34141}
\bigskip
\centerline{and}
\bigskip
\centerline{{\bf Luke Postle}%
\footnote{\texttt{lpostle@uwaterloo.ca.} This research was partially
supported by NSERC under Discovery Grant No. 2014-06162, the Ontario Early 
Researcher Awards program and the Canada Research Chairs program.}}
\smallskip
\centerline{Department of Combinatorics and Optimization}
\centerline{University of Waterloo}
\centerline{Waterloo, ON}
\centerline{Canada N2L 3G1}

\vskip 0.4in \centerline{\bf ABSTRACT}
\bigskip

{
\parshape=1.0truein 5.5truein
\noindent

A \emph{linear forest} is a forest in which every connected component is a path. The \emph{linear arboricity} of a graph $G$ is the minimum number of linear forests of $G$ covering all edges. In 1980, Akiyama, Exoo and Harary proposed a conjecture, known as the Linear Arboricity Conjecture (LAC), stating that every $\D$-regular graph $G$ has linear arboricity $\Lceil \frac{\D+1}{2}  \Rceil$. In 1988, Alon proved that the LAC holds asymptotically. In 1999, the list version of the LAC was raised by An and Wu, which is called the List Linear Arboricity Conjecture. In this article, we prove that the List Linear Arboricity Conjecture holds asymptotically.

}

\vfill \baselineskip 11pt \noindent November 23, 2017.
\vfil\eject
\baselineskip 18pt

\section{Introduction}
In this paper, we consider only undirected simple graphs. 
A \emph{linear forest} is a forest in which every connected component is a path. The \emph{linear arboricity} of a graph $G$, denoted by $la(G)$, first introduced by Harary~\cite{HARARY}, is the minimum number of linear forests of $G$ needed to cover all edges of $G$. 
Akiyama, Exoo and Harary~\cite{AEH} proposed a conjecture, known as the Linear Arboricity Conjecture, stating that for every $\D$-regular graph $G$, $la(G) = \Lceil \frac{\D+1}{2}\Rceil$. It is easy to see as shown in~\cite{ALON,list-planar} that this conjecture is equivalent to the following:

\begin{LAC}
For every graph $G$ with maximum degree $\D$, $\Lceil \frac{\D}{2} \Rceil \le la(G) \le \Lceil \frac{\D+1}{2}\Rceil$.
\end{LAC}

The lower bound is easily obtained since at least $\Lceil \frac{\D}{2} \Rceil$ linear forests are needed to cover all edges incident with a vertex with degree $\D$. However, despite much effort, the conjecture for the upper bound is still open.
It has been proved only for several special cases: complete graphs~\cite{SCJ}, complete bipartite graphs~\cite{AEH}, series parallel graphs~\cite{SP} and planar graphs~\cite{planar1,planar2}. It is also proved that the LAC is true when $\D=3,4,5,6,8,10$ (see ~\cite{AEH,AEH4,EP, GF}).
For general graphs $G$, the best known upper bound of $la(G)$, due to Guldan~\cite{GF2}, is $\Lceil\frac{3\D}{5}\Rceil$ for even $\D$ and $\Lceil \frac{3\D+2}{5}\Rceil$ for odd $\D$. 
Alon~\cite{ALON,PM} proved that the LAC holds asymptotically as follows.

\begin{THM}[Alon~\cite{ALON, PM}]\label{THM:alon}
For every $\epsilon >0$, there exists $\De$ such that for every $\D>\De$, every $\D$-regular graph has linear arboricity at most $\frac{\D}{2}(1+\e)$.
0\end{THM}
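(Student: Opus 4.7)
The plan is to combine an Eulerian-orientation decomposition of $G$ with a probabilistic repair step based on the \LL{} (LLL).

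First, reduce to the case of even $\D$: if $\D$ is odd, embed $G$ into a $(\D+1)$-regular supergraph (for instance by doubling $G$ and inserting a perfect matching between the copies), which costs only an additive constant in the final count. Now take an Eulerian orientation of $G$, so that every vertex has in-degree and out-degree $\D/2$. Build the auxiliary bipartite graph $H$ on two copies $V^+, V^-$ of $V(G)$ by adding an edge $u^+v^-$ for each arc $u \to v$. Then $H$ is $(\D/2)$-regular bipartite, so by K\"onig's theorem $H$ admits a proper $(\D/2)$-edge-coloring; pulled back to $G$ this gives a decomposition $E(G) = F_1 \cup \dots \cup F_{\D/2}$ in which every $F_i$ is a $2$-regular subgraph, i.e.\ a disjoint union of directed cycles of $G$.

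Next, choose the K\"onig coloring at random and use the LLL to force long cycles: with positive probability every cycle in every $F_i$ has length at least some $\ell = \ell(\D) \to \infty$. For a fixed directed $k$-cycle $C$, the bad event ``all arcs of $C$ lie in a single $F_i$'' has probability of order $(\D/2)^{-(k-1)}$ and depends only on arcs within a bounded distance of $C$; summing over short cycles, standard LLL bookkeeping lets $\ell$ grow with $\D$.

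Now delete one (randomly chosen) edge from each cycle and collect these edges in a set $R$. Each $F_i \setminus R$ is a disjoint union of paths, hence a linear forest. The expected degree of any vertex $v$ in $R$ is at most $\sum_{i=1}^{\D/2} 2/\ell = \D/\ell \le \e\D$ once $\ell \ge 1/\e$; Chernoff's Bound (or Talagrand's Inequality) upgrades this to $\D(R) \le 2\e\D$ with positive probability. Vizing's theorem then covers $R$ by at most $\D(R) + 1$ matchings, each a linear forest. The total linear-forest cover has size at most $\D/2 + 2\e\D + 1 \le \frac{\D}{2}(1 + 5\e)$, and rescaling $\e$ yields the stated bound.

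The main obstacle is the LLL step: proper edge colorings of a bipartite graph are highly correlated across edges, so both the single-cycle probability estimate and the LLL dependency structure demand care. A practical workaround is to precede K\"onig by first performing a random vertex partition of $G$ into many parts, so that each resulting bipartite piece has small maximum degree and the analysis of short cycles can be done on each piece with considerably more independence; this simplifies both the probability bounds and the dependency graph needed for the LLL to go through.
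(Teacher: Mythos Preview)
The overall architecture of your proposal---decompose into degree-$2$ color classes, force the monochromatic cycles to be long, break each cycle by removing an edge, and recolor the removed edges via Vizing---matches the three-step scheme the paper attributes to Alon. The differences, and the one genuine gap, are in how you carry out the first two steps.

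\textbf{Step~(1): forcing long cycles.} Your sentence ``choose the K\"onig coloring at random and use the LLL'' is the gap. A uniformly random proper $(\D/2)$-edge-coloring of a bipartite $(\D/2)$-regular graph is a highly non-product distribution: there is no evident reason why the event that a given $k$-cycle becomes monochromatic has probability of order $(\D/2)^{-(k-1)}$, and more importantly no workable dependency graph---every edge's color is entangled with every other's through the perfect-matching constraints. You acknowledge this and suggest a random vertex partition as a fix, but once you do that you have essentially abandoned the ``random K\"onig'' idea and replaced it by Alon's actual mechanism. In the paper's account, Alon does not randomize over proper colorings at all: he randomly \emph{sparsifies} first (each edge kept with probability $p=\log^{10}\D/\D$) to obtain many subgraphs of maximum degree about $\log^{10}\D$ and girth at least $g=\frac{\log\D}{20\log\log\D}$ (here the LLL is applied to independent edge-keeping trials, so both the cycle probabilities and the dependency structure are transparent), and only \emph{then} gives each sparse piece a degree-$2$ coloring deterministically. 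Your last paragraph is gesturing toward this, but as written the proof does not stand without it.

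\textbf{Step~(2): bounding $\D(R)$.} A smaller wrinkle: after picking one random edge per cycle, you invoke Chernoff to get $\D(R)\le 2\e\D$ with positive probability. Chernoff gives a tail bound at a single vertex, but the number of vertices is unbounded in terms of $\D$, so a union bound is unavailable; you need the LLL. The difficulty is that if you pick uniformly from an \emph{entire} cycle, that trial influences the degree of every vertex on the cycle, and cycles may be arbitrarily long---so the dependency graph for the LLL blows up. The fix (which the paper uses in its own Lemma~2.3) is to restrict the random choice to a fixed subpath of length $\ell$ in each cycle; then each trial affects only $O(\ell)$ vertices and the LLL goes through. Alon's original proof sidesteps this entirely by using independent-transversal results (Haxell-type) rather than random edge selection: he partitions the colors into groups of size about $g/(50e)$ and finds, for each group, a single matching meeting every monochromatic cycle in that group.

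In summary: the skeleton is right, but the load-bearing probabilistic step in your version (randomizing over proper colorings) does not work as stated, and your proposed repair is really a different argument---the one Alon actually uses.
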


A covering by linear forests can be viewed as an edge coloring where each color class induces a linear forest. 
Thus viewed as a coloring problem,  it is natural to consider its list version as follows. 

First, let us recall the definition of list chromatic index for comparison. For a graph $G$ and a list assignment $L=(L(e) : e \in E(G))$ to edges of $G$, the \emph{size of $L$}, which we denote by $|L|$, is the minimum of $|L(e)|$ taken over all $e\in E(G)$. An \emph{$L$-coloring} is a map $\phi$ defined on $E(G)$ such that $\phi(e)\in L(e)$ for every $e\in E(G)$, and for every color $c \in \bigcup_{e\in E(G)} L(e)$, $\phi^{-1}(c)$ induces a matching. The \emph{list chromatic index}, denoted $ch'(G)$ is the minimum $k$ such that for every list assignment $L=(L(e): e \in E(G))$  with $|L|\ge k$, there exists an $L$-coloring of $G$.

A \emph{linear $L$-coloring} of $G$ is a map $\phi$ defined on $E(G)$ such that $\phi(e)\in L(e)$ for every $e\in E(G)$, and for every color $c \in \bigcup_{e\in E(G)} L(e)$, $\phi^{-1}(c)$ induces a linear forest. The \emph{list linear arboricity} of graph $G$, denoted by $lla(G)$, is the minimum $k$ such that for every list assignment $L=(L(e): e \in E(G))$  with $|L|\ge k$, there exists a linear $L$-coloring of $G$. The list version of the LAC which is called the List Linear Arboricity Conjecture is as follows.

\begin{LLAC}
For every graph $G$ with maximum degree $\D$, $\Lceil\frac{\D}{2}\Rceil \le la(G)=lla(G) \le \Lceil \frac{\D+1}{2}\Rceil $.
\end{LLAC} 
The LLAC was first proposed by An and Wu~\cite{LLAC}, and they proved it holds for planar graphs with maximum degree at least $13$ in anther paper~\cite{list-planar}.
	
In this paper, we prove that 
the LLAC holds asymptotically,  in a manner similar to 
Theorem~\ref{THM:alon}. Indeed our result implies Theorem~\ref{THM:alon}.

In fact, we prove a stronger result, which in order to state, we need the following definitions.
 For a graph $G$ and a list assignment $L=(L(e): e\in E(G))$, let $L(v)=\bigcup_{e\sim v} L(e)$  for every vertex $v\in V(G)$, that is, $L(v)$ is the set of colors which are contained in the list of at least one edge incident with $v$.
For $c\in L(v)$, the \emph{color degree of $v$ with respect to $c$},  denoted by $d_G^L(v,c)$, is the  number of edges $e$ incident with $v$ where $c\in L(e)$.  The \emph{maximum color degree} of $G$ with respect to $L$, which we denote by $\DLG$, is the maximum of $d_G^L(v,c)$ taken over all $v\in V(G)$ and  $c\in L(v)$. 
We are now ready to state our main theorem as follows.

\begin{restatable}{THM}{MAIN}\label{THM:main}
For every $\e >0$, there exists $\de$ such that for every $d > \de$, if $G$ is a graph and $L=(L(e): e\in E(G))$ is a list assignment such that
\begin{itemize}
\item $|L|\ge \frac{d}{2}(1+\epsilon)$, and
\item $\DLG \le d$,
\end{itemize}
then $G$ is linear $L$-colorable.
\end{restatable}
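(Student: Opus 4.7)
The plan is to adapt Alon's proof of the asymptotic Linear Arboricity Conjecture (Theorem~\ref{THM:alon}) to the list setting via the semi-random method, combined with the Lovász Local Lemma, Chernoff's Bound, and Talagrand's Inequality. The overarching strategy is to reduce to a list edge-coloring problem on a bipartite auxiliary graph, apply an asymptotic list version of Kahn's Theorem to obtain color classes of maximum degree at most $2$, and then handle cycle-formation by a probabilistic argument.

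First, by standard padding (adding dummy edges and enlarging lists) I would reduce to the case where $d_G^L(v,c) = d$ for every vertex $v$ and every color $c \in L(v)$, and where every list has size exactly $\Lceil \tfrac{d}{2}(1+\e)\Rceil$. I would then fix a uniformly random orientation of $E(G)$ and, using Chernoff's Bound together with the Lovász Local Lemma, extract an orientation under which every vertex has in- and out-color-degree $\tfrac{d}{2}(1+o(1))$ in every color of its list. Viewing the oriented graph as a bipartite auxiliary graph $H$ between tails and heads of arcs, $H$ has maximum color-degree $\tfrac{d}{2}(1+o(1))$ but list size $\tfrac{d}{2}(1+\e)$, so the list/color-degree ratio is bounded away from $1$. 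An asymptotic list version of Kahn's Theorem then yields a proper list edge coloring of $H$; pulling it back to $G$ produces a coloring in which every color class has maximum degree at most $2$, that is, a disjoint union of paths and cycles.

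The main obstacle is the remaining cycle-avoidance constraint, which is a global property and hence not directly amenable to the Lovász Local Lemma. To overcome it I would incorporate the orientation-plus-coloring step into an iterative nibble, tracking after each round not only the color-degree but also the number of monochromatic path-endpoints at each vertex, with concentration guaranteed by Chernoff's Bound and Talagrand's Inequality. Short monochromatic cycles (of length below some slowly-growing cutoff) are controlled by the Lovász Local Lemma, since the event ``color class $c$ contains a cycle of length $\ell$ through the edge $e$'' depends only on a bounded-radius neighborhood of $e$; long monochromatic cycles carry total expected count $o(1)$ by a direct union bound using the decay $(\tfrac{2}{1+\e})^{\ell}$ in length $\ell$. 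The slack $\e$ in the list sizes then permits recoloring one edge per surviving cycle to break it, completing the linear $L$-coloring and establishing Theorem~\ref{THM:main}.
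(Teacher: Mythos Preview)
Your reduction to a degree-$2$ $L$-coloring via a random orientation and a bipartite auxiliary graph is fine and is essentially equivalent to the paper's use of Theorem~\ref{THM:copy} with $t=2$ (the paper obtains the same effect by duplicating each color and applying the color-degree version of Kahn's Theorem). So up through the point where you have a coloring in which every color class has maximum degree at most~$2$, your outline and the paper's agree.

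The genuine gap is in your treatment of monochromatic cycles. First, the union-bound claim that long monochromatic cycles have total expected count $o(1)$ with decay $(\tfrac{2}{1+\e})^\ell$ is not justified: the coloring produced by an iterative nibble is not a product measure, so you have no independence across edges of a cycle; and in the color-degree setting the maximum degree of $G$ is not bounded in terms of $d$, so the number of cycles of a given length is not controlled in any useful way. Second, ``the slack $\e$ permits recoloring one edge per surviving cycle'' does not work as stated: after the nibble there is no guarantee that any edge of a surviving cycle still has an unused color at both endpoints, and even if it did, assigning that color could create a new monochromatic cycle elsewhere, so the recoloring step can cascade.

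The paper resolves both issues by two ideas absent from your outline. It first sets aside, for every vertex $v$, a reserve set $R(v)\subseteq L(v)$ (Lemma~\ref{LEM:RES}) so that the list $L'(e)=L(e)\setminus(R(u)\cup R(v))$ is still of size at least $\tfrac{d}{2}(1+\tfrac{\e}{2})$ while $R(e)=L(e)\cap R(u)\cap R(v)$ has size at least $\tfrac{d}{\log^{1/2} d}(1+\e)$; crucially $R(v)\cap L'(v)=\emptyset$, so any later recoloring from $R$ cannot interact with the $L'$-coloring. It then does \emph{not} try to avoid monochromatic cycles, only to force them to be long: by retaining each color independently with probability $\tfrac{\log^3 d}{d}$ it produces sublists $L''\subseteq L'$ such that each color's subgraph $G_c$ has girth at least $q(d)=\tfrac{\log d}{6\log\log d}$ (Lemma~\ref{LEM:LEM4}), and then applies Theorem~\ref{THM:copy} to $L''$ to obtain a degree-$2$ $L'$-coloring whose monochromatic cycles all have length at least $q(d)$ (Lemma~\ref{LEM:PART}). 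Finally, choosing one edge uniformly from a size-$q(d)$ subset of each such cycle yields a subgraph $H$ with $\Delta_H^R \le \tfrac{d}{\log^{1/2} d}$ (Lemma~\ref{LEM:Subgraph}), and Theorem~\ref{THM:DK} gives a \emph{proper} $R$-coloring of $H$; since $R(v)\cap L'(v)=\emptyset$ for every $v$, overwriting the edges of $H$ with this $R$-coloring breaks every cycle without creating any new monochromatic vertex of degree~$3$ or any new monochromatic cycle. Your proposal is missing both the reserve-color mechanism and the high-girth intermediate step, and without them the final recoloring cannot be made rigorous.
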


Since $\DLG$ is at most the maximum degree of $G$, Theorem~\ref{THM:main} yields the asymptotic version of the LLAC as a corollary.

\begin{COR}\label{COR:main}
For every $\e >0$, there exists $\De$ such that for every $\D > \De$, if $G$ is a graph with maximum degree $\D$ and $L=(L(e): e\in E(G))$ is a list assignment such that $|L|\ge \frac{\D}{2}(1+\epsilon)$, then $G$ is linear $L$-colorable, and thus $lla(G) \le \frac{\D}{2}(1+\e)$.
\end{COR}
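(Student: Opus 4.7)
The plan is to adapt Alon's proof of Theorem~\ref{THM:alon} to the list setting via the probabilistic method, using the \LL\ together with Chernoff- and Talagrand-type concentration. The overall strategy is to produce a partial linear $L$-coloring of most of $G$, exploit the slack $|L| \ge (d/2)(1+\e)$ in the list sizes, and then complete the remainder deterministically while repairing any stray monochromatic cycles.

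\textbf{Step 1 (Balanced orientation).} First, I would orient the edges of $G$ so that for every vertex $v$ and every color $c \in L(v)$, both the out-color-degree and the in-color-degree at $(v,c)$ are within $o(d)$ of $d_G^L(v,c)/2$. This can be done deterministically, via an Eulerian argument applied to each ``color subgraph'' $\{e : c \in L(e)\}$ after adding dummy edges to fix parities, or randomly, via a uniform orientation combined with Chernoff concentration plus the \LL. Working in the oriented setting localizes bad events and simplifies the analysis below.

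\textbf{Step 2 (Partial random $L$-coloring via nibble).} Apply an iterative R\"odl-nibble-style random coloring: in each of $T = O(\log d)$ rounds, independently tentatively color each currently uncolored edge with a uniformly random color from its remaining list, activated with a small probability $p = \Theta(1/d)$. Keep the color if doing so does not produce a third edge of the same color at either endpoint; otherwise discard and reinsert the edge into the uncolored residual. Using the \LL\ and concentration, one shows that with positive probability, after $T$ rounds the residual uncolored graph has maximum color degree $o(d)$, while the partial coloring obtained so far is linear at every vertex (max degree $\le 2$ per color class).

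\textbf{Step 3 (Finishing off and cycle repair).} Since the residual color degree is $o(d)$ while $|L| \ge (d/2)(1+\e)$, each remaining edge has $\Omega(d)$ admissible ``spare'' colors at each endpoint --- colors whose current count at that endpoint is $0$ or $1$. A final greedy or LLL-based pass completes the coloring. In the same spirit, any monochromatic cycle that was produced along the way is repaired by rerouting one of its edges through a spare color at both of its endpoints.

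\textbf{Main obstacle.} The crux is simultaneously enforcing the ``no monochromatic cycle'' condition. The degree-at-most-$2$ constraint is local, but a monochromatic cycle of length $k$ is a constraint on $k$ edges, so its bad event has dependency degree far too high for the \LL\ to control directly when $k$ is large. The standard workaround, enabled by the orientation in Step 1, is to ensure each color class becomes a union of directed paths and directed cycles only; the cycles are then eliminated by post-processing, using the spare colors from Step 3. Proving that this surgery can be carried out globally and simultaneously --- without reintroducing degree violations, creating new cycles, or exhausting the spare budget at any vertex --- will be the main technical hurdle.
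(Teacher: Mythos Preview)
In the paper, Corollary~\ref{COR:main} is not given its own proof at all: it follows in one line from Theorem~\ref{THM:main} by taking $d=\D$ and observing that $\D_G^L\le \D(G)=\D$.  What you have written is therefore not a proof of the corollary but an attempted alternative proof of the main theorem itself, and as such it should be judged against the paper's actual three-lemma machinery (Lemmas~\ref{LEM:PART}, \ref{LEM:RES}, \ref{LEM:Subgraph}).

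Judged that way, your proposal has a genuine gap at Step~3, and you essentially concede this yourself in the last paragraph.  Two specific problems:
\begin{itemize}
\item \emph{Recoloring can create new cycles.}  Your ``spare'' colors are drawn from the same lists $L(e)$ that were used in Steps~1--2.  If you recolor an edge $e=uv$ of a monochromatic $c$-cycle with a color $c'$ that already has degree~$1$ at $u$ and at $v$, nothing prevents $e$ from closing a monochromatic $c'$-cycle.  The paper avoids this by first splitting $L$ into $L'$ and $R$ with $R(v)\cap L'(v)=\emptyset$ at every vertex (Lemma~\ref{LEM:RES}); the main coloring uses only $L'$, the repair uses only $R$, and disjointness at vertices guarantees that no new mixed cycle can arise.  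Your scheme has no analogue of this separation.
\item \emph{No control on the cycles to be repaired.}  After your nibble in Step~2 you have no bound on the number or length of monochromatic cycles, so you cannot argue that a hitting set of edges has small (color) degree and hence small demand on the spare budget.  The paper handles this by forcing every monochromatic cycle to have length at least $q(d)=\frac{\log d}{6\log\log d}$ (Lemma~\ref{LEM:PART}, via a high-girth sparsification of each color), and then choosing one random edge per cycle to obtain a hitting subgraph $H$ with $\D_H^R = o(d)$ (Lemma~\ref{LEM:Subgraph}).  Your orientation in Step~1 does not substitute for this: it may make each color class a union of directed paths and directed cycles, but directed cycles are still cycles, and you still need a sparse hitting set and a safe palette to recolor it.
\end{itemize}
In short, Steps~1--2 are plausible (and Step~2 is morally what Theorem~\ref{THM:copy} already gives), but Step~3 as written is a statement of the difficulty rather than a resolution of it.  The two missing ingredients are exactly the reserve-color separation and the high-girth/long-cycle guarantee that the paper supplies.
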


\subsection{Relations to arboricity and edge coloring} 

The \emph{arboricity} of a graph $G$, denoted by $ar(G)$, is the minimum number of forests of $G$ needed to cover all edges of $G$. 
Linear arboricity is a variant of arboricity, and since a linear forest is a forest with maximum degree at most two, this concept can be extended to a covering by forests with bounded maximum degree as follows.
The \emph{$t$-arboricity} of a graph $G$, denoted by $ar_t(G)$, is the minimum number of forests with maximum degree at most $t$  needed to cover all edges of $G$.
Note that $ar_1(G)=\chi'(G)$, where $\chi'(G)$ is the chromatic index of $G$, and $ar_2(G)=la(G)$. Note that $ar_t(G) \ge ar_{t+1}(G)$ for all $t$ and that $ar_{|V(G)|}(t)=ar(G)$.
Given the latter statement, arboricity can be thought of  as $\infty$-arboricity.

For a graph $G$ with maximum degree $\D$, Vizing's Theorem~\cite{VIZING}, also proved by Gupta~\cite{GUPTA}, gives that $ar_1(G) \in \{\D, \D+1\}$ which implies that $ar_1(G)\le \D+1$. 
The LAC also states that 
$ar_2(G) \le \Lceil \frac{\D+1}{2}\Rceil$. 
These works, although the LAC is not proved yet, naturally lead to the question if $ar_t(G)\le \Lceil \frac{\D+1}{t}\Rceil$ for every positive integer $t$, which would be an extension of the LAC. 

But, this question turns out to be false for every $t\ge 3$ since, if $G$ is $\D$-regular, then we have $ar_t(G) \ge \Lceil \frac{\D |V(G)|}{2(|V(G)|-1)}\Rceil \ge \Lceil \frac{\D}{2}\Rceil$ for every $t$  because every spanning tree of $G$ has at most $|V(G)|-1$ edges.
Indeed, we have that $ar_t(G) \ge \max_{H \subseteq G} \Lceil \frac{|E(H)|}{|V(H)|-1} \Rceil$ for every graph $G$ and every $t$ (even for $t=\infty$). Furthermore,  Nash-Williams~\cite{NW} proved that the equality  holds when $t=\infty$, that is, $ar(G)=\max_{H \subseteq G} \Lceil \frac{|E(H)|}{|V(H)|-1} \Rceil$.

The \emph{list $t$-arboricity} 
of $G$, denoted by $lar_t(G)$, 
 is the list version of the $t$-arboricity of $G$  (defined similarly to list linear arboricity). Note that $lar_2(G) = lla(G)$ and that $lar_1(G) = ch'(G)$.
Similarly one can define the \emph{list arboricity} of $G$, $lar(G)$, as the list version of the arboricity of $G$.
The LLAC is asking if $ar_2(G)=lar_2(G)$, and this question can be extended to ask if $ar_t(G)=lar_t(G)$ holds for every positive integer $t$.
Seymour~\cite{PS} showed that it holds when $t=\infty$, that is, $ar(G)=lar(G)$, 
and Theorem~\ref{THM:alon} and~\ref{THM:main} show that $ar_2(G)=lar_2(G)$ holds asymptotically. When $t=1$, this question is equivalent for simple graphs to the well-known List Coloring Conjecture stated below.

\begin{LCC}
For every loopless multigraph $G$, $\chi'(G)=ch'(G)$.
\end{LCC}

The lower bound from Vizing's Theorem and the following theorem by Kahn~\cite{Kahn} confirmed that for simple graphs the list coloring conjecture holds asymptotically.


\begin{KAHN}
For every graph $G$ with maximum degree $\D$, the list edge chromatic number of $G$ is $\D+o(\D)$.
\end{KAHN}

Vizing's theorem plays a crucial role in the proof of Theorem~\ref{THM:alon}, but Kahn's Theorem can not be directly applied to the proof of Theorem~\ref{THM:main}.
Instead, we use a generalization of Kahn's Theorem, (we refer to it as the \emph{color degree version} of Kahn's Theorem) as follows. 

\begin{THM}\label{THM:DK}
For every $\epsilon>0$, there exists $\de$ such that if $d> \de$, then for every graph $G$ and every list assignment $L$ to $E(G)$
such that
\begin{itemize}
\item $|L|\ge (1+\e)d$, and 
\item $\D_G^L \le d$,
\end{itemize}
$G$ is $L$-colorable. 
\end{THM}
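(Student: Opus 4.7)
The plan is to generalize Kahn's semi-random proof of the asymptotic list edge chromatic bound by replacing the role of the maximum degree throughout with the maximum color degree $\DLG$. The argument proceeds via the R\"odl nibble: we perform a bounded number of rounds of random partial coloring, each of which shrinks both the current list size $|L(e)|$ and the current maximum color degree by essentially the same multiplicative factor, so the ratio $|L|/\DLG$ is preserved and the slack $\e$ in fact grows slightly each round. After $O(\log(1/\e))$ rounds the slack blows up enough that a direct application of the \LL{} finishes the coloring of the residual instance.

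For one round, I would fix a small parameter $p=c_0/d$ and, for every edge $e$ and every color $c\in L(e)$, activate the pair $(e,c)$ independently with probability $p$; if a unique pair is activated at $e$, tentatively color $e$ with that color, and accept the tentative coloring if and only if no edge $e'$ sharing an endpoint with $e$ was tentatively assigned the same color. After removing accepted edges and deleting from $L(e)$ every color now forbidden at an endpoint of $e$, the quantities to track are the updated list size $|L'(e)|$ and, for each vertex $v$ and color $c\in L(v)$, the updated color degree $d_{G'}^{L'}(v,c)$. A direct computation gives $\E[|L'(e)|] \approx (1-\alpha)|L(e)|$ and $\E[d_{G'}^{L'}(v,c)] \approx (1-\alpha)d_G^L(v,c)$ for essentially the same $\alpha=\alpha(p)$, so the slack factor over $(1+\e)$ is preserved and in fact improves by $\Omega(p\e)$, since edges with larger lists have a strictly higher chance of being kept.

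The remaining work is to turn these expectations into high-probability statements and glue the rounds together via the \LL. The bad events at each round are, for every edge $e$, that $|L'(e)|$ falls far below its expectation, and, for every pair $(v,c)$, that $d_{G'}^{L'}(v,c)$ exceeds its expectation by a non-trivial multiplicative factor. Both variables are bounded Lipschitz and certifiable functions of the independent activations, and crucially the number of activations that can affect a given updated color degree at $(v,c)$ is at most $2\DLG$ --- this is precisely where the color degree bound replaces the degree bound --- so Talagrand's Inequality yields exponentially small tail probabilities. The dependency between two bad events is polynomial in $\DLG$ (bounded by the number of pairs $(e',c')$ at distance $2$ in the color-adjacency graph), so the \LL{} applies with a large margin. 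The main obstacle is exactly this bookkeeping: one must verify, uniformly across all rounds, that the Lipschitz constant, certificate size, and dependency degree for each bad event scale with $\DLG$ rather than the ambient maximum degree of $G$, since there is no a priori bound on the total degree. Once this is set up, iterating the round and concluding with a standard final step (when $|L(e)| \gg (\DLG)^2$, a single uniformly random choice per edge followed by \LL{} avoids all conflicts) yields the theorem.
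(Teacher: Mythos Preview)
The paper does not actually prove Theorem~\ref{THM:DK}. It treats the result as known, remarking that ``Kahn's proof is actually done in terms of $\DLG$ though Theorem~\ref{THM:DK} is not stated there; for its statement and other generalizations see~\cite{BDP}.'' In other words, the paper's ``proof'' is simply the observation that Kahn's original nibble argument already tracks color degrees rather than vertex degrees, so no new work is needed.

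Your proposal is a faithful sketch of exactly that nibble argument, and the crucial insight you isolate --- that the Lipschitz constants, certificate sizes, and dependency degrees in the concentration and \LL{} steps can all be bounded polynomially in $\DLG$ rather than in the (possibly unbounded) maximum degree of $G$ --- is precisely why Kahn's proof goes through verbatim in this setting. So your approach and the paper's are the same in spirit; you are just writing out what the paper leaves to the citation.

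One quantitative slip worth flagging: the claim that ``the number of activations that can affect a given updated color degree at $(v,c)$ is at most $2\DLG$'' is too optimistic. Whether a particular edge $e=vu$ with $c\in L(e)$ survives uncolored and keeps $c$ in its list depends on activations $(e,c')$ for all $c'\in L(e)$, on activations $(f,c')$ at edges $f$ adjacent to $e$ with $c'\in L(f)$, and on activations $(f,c)$ at edges $f\sim u$ with $c\in L(f)$. Summing over the at most $\DLG$ edges $e$ contributing to $d_{G'}^{L'}(v,c)$ gives a bound that is polynomial in $\DLG$ (of order $d^3$ or so), not linear. This does not damage the argument --- the tail bounds from Talagrand are $e^{-\Omega(\text{poly}\log d)}$ and the \LL{} only needs to beat a polynomial dependency --- but the bookkeeping you flagged as ``the main obstacle'' is slightly heavier than you indicated.
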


Note that Theorem~\ref{THM:DK} implies Kahn's Theorem, but the converse does not hold. Kahn's proof is actually done in terms of $\D_G^L$ though Theorem~\ref{THM:DK} is not stated there; for its statement and other generalizations see~\cite{BDP}.

\subsection{On degree $t$ edge coloring}

We define a \emph{degree $t$ coloring} of $E(G)$ as a coloring of the edges so that every monochromatic subgraph has maximum degree at most $t$. For every positive integer $t$, let $\chi_t'(G)$ denote the minimum number of colors such that $G$ has a degree $t$ coloring using that many colors. Similarly let $ch'_t(G)$ denote the list version of $\chi_t'(G)$.

One might ask questions similar to the LLAC for these two parameters: for a given $t$, does $\chi'_t(G)=ch'_t(G)$ hold? Note that $\chi'_1(G)=\chi'(G)$ and $ch'_1(G)=ch'(G)$. 
Hence the List Coloring Conjecture is equivalent to this question holding in the affirmative for $t=1$.
Surprisingly, the List Coloring Conjecture nearly implies this question holds in the affirmative for every $t$ as follows.
 
It is easy to see that for every graph $G$ with maximum degree $\D$, $\chi'_t(G)\le \Lceil \frac{\chi'(G)}{t} \Rceil$: partition the colors into sets of size $t$ and then merge each set into a new color.
In fact $ch'_t(G)\le \Lceil \frac{ch'(G)}{t} \Rceil$ is also true as the following proposition shows by inverting the merging procedure.
\begin{PROP}\label{PROP:copy}
For every positive integer $t$ and graph $G$, $ch'_t(G) \le \Lceil \frac{ch'(G)}{t} \Rceil$.
\end{PROP}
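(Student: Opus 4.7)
The plan is to invert the color-merging procedure that yields the non-list bound $\chi'_t(G) \le \Lceil \chi'(G)/t \Rceil$. Let $k := \Lceil ch'(G)/t \Rceil$, fix an arbitrary list assignment $L=(L(e):e\in E(G))$ with $|L|\ge k$, and construct a new list assignment $L'$ on $E(G)$ by splitting each color into $t$ copies: for every color $c \in \bigcup_{e\in E(G)} L(e)$, introduce $t$ fresh symbols $c^{(1)},\dots,c^{(t)}$ (distinct across different $c$), and set $L'(e):=\{c^{(i)} : c\in L(e),\ 1\le i\le t\}$. Then $|L'(e)| = t|L(e)| \ge tk \ge ch'(G)$, so by the definition of $ch'(G)$ there exists a proper $L'$-edge-coloring $\phi'$ of $G$, i.e., one in which every color class is a matching.

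Next I would project $\phi'$ back to an $L$-coloring $\phi$ by setting $\phi(e) := c$ whenever $\phi'(e) = c^{(i)}$ for some $i\in\{1,\dots,t\}$; by construction $\phi(e)\in L(e)$. To verify that $\phi$ is a degree-$t$ $L$-coloring, I would observe that for any vertex $v$ and any color $c$, the edges at $v$ receiving color $c$ under $\phi$ are precisely those receiving one of $c^{(1)},\dots,c^{(t)}$ under $\phi'$. Since $\phi'$ is a proper edge-coloring, each $c^{(i)}$ is incident with $v$ on at most one edge, so $c$ appears on at most $t$ edges at $v$. Hence the subgraph induced by color $c$ has maximum degree at most $t$, establishing $ch'_t(G) \le k$, as required.

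There is no real obstacle here: the argument is a one-line bookkeeping reduction to the definition of list chromatic index. The only conceptual point is recognizing that splitting one color into $t$ list copies exactly dualizes the operation of collapsing $t$ matchings into a single subgraph of maximum degree $t$, and that this dualization transports properness into the degree-$t$ condition vertex by vertex.
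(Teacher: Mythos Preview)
Your proof is correct and follows essentially the same approach as the paper: construct a new list assignment by replacing each color with $t$ copies (the paper writes this as $L'(e)=L(e)\times[t]$), use the definition of $ch'(G)$ to obtain a proper $L'$-coloring, and then project back by forgetting the copy index. Your verification of the degree-$t$ condition is slightly more explicit than the paper's, but the argument is identical in substance.
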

\begin{proof}
Let $L$ be a list assignment such that $|L|=\Lceil \frac{ch'(G)}{t} \Rceil$. Construct a list assignment $L'=(L'(e)=L(e) \times [t] : e \in E(G))$ from $L$ by copying each color $t$ times. 
Then we have that $|L'| =|L|\times t \ge  ch'(G)$. So there exists an $L'$-coloring $\phi$ of $G$.
We construct a map $\psi$ defined on $E(G)$ from $\phi$ by merging copied colors, that is, if $\phi(e)=(c,i)$ for $c\in L(e)$ and $i \in [t]$, then we set $\psi(e)=c$.  
Evidently, $\psi(e)\in L(e)$, and in $\psi$, every monochromatic subgraph has maximum degree at most $t$. Hence $\psi$ is a degree $t$ $L$-coloring of $G$. Since $L$ was arbitrary, this shows that $ch'_t(G) \le \Lceil \frac{ch'(G)}{t} \Rceil$ as desired.
\end{proof}

Note that $\chi'_t(G) \ge \frac{\chi'(G)}{t+1}$ by applying Vizing's Theorem so as properly color every color class of maximum degree at most $t$ with $t+1$ colors. Hence, 
$$\frac{\chi'(G)}{t+1} \le \chi'_t(G)=ch'_t(G) \le \frac{ch'(G)}{t},$$
for every graph $G$ and every $t$. Thus if the List Coloring Conjecture is true, then $ch'_t(G) \le (1+\frac{1}{t+1})\chi'_t(G)$.
Furthermore, using Kahn's Theorem and Proposition~\ref{PROP:copy}, we have the following.

\begin{THM}\label{THM:Kahncopy}
For every $\e > 0$, there exists $\D_\e$ such that for every positive integer $t$ and graph $G$ with $\D(G) > \D_\e$,
$$ch'_t(G) \le (1+\e)\frac{\D}{t}.$$
\end{THM}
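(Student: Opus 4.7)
The plan is to combine Proposition~\ref{PROP:copy} with Kahn's Theorem, which is exactly the pair of tools just developed. Set $\e' = \e/2$; by Kahn's Theorem there exists $\D_\e$ such that every graph $G$ with $\D(G) > \D_\e$ satisfies $ch'(G) \le (1+\e')\D$, and Proposition~\ref{PROP:copy} then yields
$$ch'_t(G) \le \Lceil \frac{ch'(G)}{t} \Rceil \le \Lceil \frac{(1+\e')\D}{t} \Rceil \le \frac{(1+\e')\D}{t} + 1.$$
In the range $t \le \e\D/2$ the additive $+1$ is at most $(\e/2)\D/t$, which is absorbed by the slack between $\e'$ and $\e$, so $ch'_t(G) \le (1+\e)\D/t$ follows directly.

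The main obstacle is the complementary regime $t > \e\D/2$, where $\D/t$ is a bounded constant and the $+1$ from the ceiling is no longer asymptotically negligible relative to the multiplicative slack. I would sidestep this by running the copy trick of Proposition~\ref{PROP:copy} directly at the list level rather than on the chromatic index: given any list assignment $L$ on $E(G)$ with $|L| \ge \Lceil (1+\e)\D/t \Rceil$, form $L'$ by setting $L'(e) = L(e) \times [t]$. Then $|L'| \ge (1+\e)\D$, and since every new color $(c,i)$ inherits $d_G^{L'}(v,(c,i)) = d_G^L(v,c) \le \D$, one has $\D_G^{L'} \le \D$. Kahn's Theorem (or equivalently Theorem~\ref{THM:DK} with $d = \D$) then supplies a proper $L'$-edge-coloring of $G$, and merging each pair $(c,i)$ back to $c$ converts it into a degree-$t$ $L$-coloring. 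The trivial boundary case $t \ge \D$ is handled by a single color, giving $ch'_t(G) = 1 \le (1+\e)\D/t$ whenever $t \le (1+\e)\D$.
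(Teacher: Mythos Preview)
Your first paragraph is exactly the paper's approach: the theorem is stated without proof, preceded only by the phrase ``using Kahn's Theorem and Proposition~\ref{PROP:copy}''. Your concern about the ceiling when $t$ is comparable to $\D$ is legitimate and the paper does not address it; your remedy---running the copy trick directly on the lists and invoking Theorem~\ref{THM:DK}---is precisely the proof the paper gives for the very next result, Theorem~\ref{THM:copy} (specialized to $d=\D$), so you have simply been more careful and in doing so anticipated that theorem. The range $t>(1+\e)\D$ that your final sentence leaves open is a genuine defect of the statement as literally written (the right-hand side drops below $1$ while $ch'_t(G)\ge 1$ whenever $G$ has an edge); the paper is evidently informal here and should be read with $t\le\D$ or with an implicit ceiling on the right-hand side.
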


Using Theorem~\ref{THM:DK} and the idea of copying colors as in the proof of Proposition~\ref{PROP:copy}, we deduce the following color degree version of Theorem~\ref{THM:Kahncopy} or equivalently the degree $t$ version of Theorem~\ref{THM:DK}, which will be used to prove one of the main lemmas in Section~\ref{SEC:partition}.
\begin{THM}\label{THM:copy}
For every $\epsilon>0$, there exists $\de$ such that for all $d > \de$ the following holds: for every graph $G$, every positive integer $t$ and every list assignment $L$ of $E(G)$ with $|L|\ge (1+\e)\frac{d}{t}$ and $\D_G^L \le d$, $G$ has a degree $t$ $L$-coloring.
\end{THM}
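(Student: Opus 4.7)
The plan is to mimic the proof of Proposition~\ref{PROP:copy}, replacing the use of $ch'(G)$ with Theorem~\ref{THM:DK}. Fix $\epsilon>0$ and let $\de$ be the threshold supplied by Theorem~\ref{THM:DK} applied with the same $\epsilon$. Suppose $d>\de$ and that $G$, $L$, $t$ satisfy $|L|\ge(1+\epsilon)d/t$ and $\DLG\le d$.

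First I would construct the ``blown up'' list assignment $L'$ on $E(G)$ by setting $L'(e)=L(e)\times[t]$; that is, each color $c\in L(e)$ is replaced by $t$ distinct copies $(c,1),\ldots,(c,t)$. Then $|L'(e)|=t\cdot|L(e)|\ge(1+\epsilon)d$, so $|L'|\ge(1+\epsilon)d$. Moreover, for any vertex $v$ and any copy $(c,i)\in L'(v)$, an edge $e$ incident with $v$ satisfies $(c,i)\in L'(e)$ if and only if $c\in L(e)$, so $d_G^{L'}(v,(c,i))=d_G^L(v,c)\le d$. Hence $\D_G^{L'}\le d$ as well, so the hypotheses of Theorem~\ref{THM:DK} are met for $(G,L')$ with the same $\epsilon$ and $d$.

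Then I would invoke Theorem~\ref{THM:DK} on $(G,L')$, which yields a proper $L'$-edge-coloring $\phi$ of $G$. Define $\psi$ on $E(G)$ by projecting away the copy index: if $\phi(e)=(c,i)$, set $\psi(e)=c$. Then $\psi(e)\in L(e)$ for every edge, and for each color $c$, $\psi^{-1}(c)=\bigcup_{i=1}^{t}\phi^{-1}((c,i))$ is the edge-disjoint union of at most $t$ matchings, so the subgraph it induces has maximum degree at most $t$. Thus $\psi$ is a degree $t$ $L$-coloring of $G$, as required.

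There is essentially no obstacle here: the crucial point is that color degrees are preserved exactly under color-copying, so the blow-up of $L$ automatically inherits the bound on $\DLG$. This is precisely why it is important that Theorem~\ref{THM:DK} is stated in terms of $\DLG$ rather than the maximum degree of $G$; using Kahn's Theorem directly would only control $\D(G)$ and would not give any bound on $\D_G^{L'}$ after blowing up, so the color-degree formulation is what makes this otherwise routine reduction go through.
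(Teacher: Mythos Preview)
Your proposal is correct and follows essentially the same approach as the paper's own proof: both blow up $L$ to $L'(e)=L(e)\times[t]$, verify that $|L'|\ge(1+\e)d$ and $\D_G^{L'}=\D_G^L\le d$, apply Theorem~\ref{THM:DK}, and then project out the copy index to obtain a degree $t$ $L$-coloring. Your write-up is in fact slightly more explicit than the paper's about why $\D_G^{L'}=\D_G^L$ and why the projected color classes have maximum degree at most $t$.
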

\begin{proof}
Construct a list assignment $L'=(L'(e)=L(e) \times [t] : e \in E(G))$ from $L$ by copying each color $t$ times. Then we have that $|L'| =|L|\times t \ge  (1+\e)d$. Note that $\D_G^{L'} = \D_G^L \le d$. By Theorem~\ref{THM:DK}, $G$ has an $L$-coloring (assuming the same $\de$). We construct a map $\psi$ defined on $E(G)$ from $\phi$ by merging copied colors, that is, if $\phi(e)=(c,i)$ for $c\in L(e)$ and $i \in [t]$, then we set $\psi(e)=c$.  Evidently, $\psi(e)\in L(e)$, and in $\psi$, every monochromatic subgraph has maximum degree at most $t$. Hence $\psi$ is a degree $t$ $L$-coloring of $G$ as desired.
\end{proof}

\section{Outline and Proof of Theorem~\ref{THM:main}}\label{SEC:proof}
First, a quick word on notation. Let $G$ be a graph and $L$ a list assignment to edges of $G$.
For a subgraph $H$ of $G$, we say $H$ is \emph{linear $L$-colorable} if $H$ is linear $L'$-colorable where $L'=(L(e): e\in E(H))$. We say a map $\phi$ defined on $E(H)$ is a \emph{linear $L$-coloring of $H$} if $\phi$ is a linear $L'$-coloring of $H$.

\subsection{Overview of Alon's proof}

Before proving the main theorem, we take a closer look at the idea of the proof of Theorem~\ref{THM:alon}. We may construe the proof by Alon~\cite{PM} as having three essential steps:

\begin{itemize}
\item[(1)] Finding a degree $2$-coloring of $E(G)$ using $\frac{\D}{2} + o(\D)$ colors such that each monochromatic cycle has length at least $g=\frac{\log \D}{20 \log \log \D}$.
\item[(2)] Finding a subgraph $H$ of maximum degree $o(\D)$ whose edges intersect every monochromatic cycle.
\item[(3)] Finding a degree $2$-coloring of $E(H)$ using $o(\D)$ new colors.
\end{itemize}
To prove (1), one can partition the edges of the graph into about $\frac{\D}{\log^{10} \D}$ subgraphs with maximum degree at most $d=\log^{10} \D + \log^6 \D$ and girth at least $g$ and one remaining part with maximum degree $o(\D)$ with no restriction on the girth. This can be done by repeatedly extracting such high girth subgraphs by choosing edges at random with probability $p=\frac{\log^{10} \D}{\D}$. Each high girth subgraph is then given a degree 2 coloring using $\Lceil \frac{d}{2} \Rceil$ colors while the last part is properly edge colored with $o(\D)$ colors using Vizing's theorem. 

To prove (2), one partitions the colors into sets of size at most $\frac{g}{50e}$ (or $\frac{g}{4}$ if using Haxell~\cite{Haxell}); for each such set, a matching is found that intersects every monochromatic cycle in that set of colors. Such a matching is equivalent to finding an independent transversal of the monochromatic cycles in the line graph, which is possible since each cycle has length at least $g$ and yet each edge has at most $2(\frac{g}{50e})$ incident edges from that set of colors. Then we let $H$ be the union of these $o(\D)$ matchings and hence $H$ has maximum degree $o(\D)$. To prove (3), one can apply Vizing's theorem to properly edge color $H$.

\subsection{Difficulties for list coloring}

What then are the difficulties in transferring Alon's proof to list coloring? For (1), it follows from Theorem~\ref{THM:copy} that $G$ has a degree 2 $L$-coloring yet we cannot guarantee that each monochromatic cycle has length at least $g$. Indeed, we will prove that (1) holds for list coloring however Alon's proof does not carry over because of the important fact that we cannot guarantee that each high girth subgraph has its own unique subset of colors from which to be colored. 

For (2), another difficulty arises in that there could be many more than $\D$ colors and hence we cannot guarantee that the maximum degree of $H$ is $o(\D)$. For (3), while we can use Kahn's Theorem to color the edges of $H$, it may be that the colors used on $E(H)$ are the same as the colors used in step (1), that is, we cannot simply introduce new colors for $E(H)$ since we have to ensure that each edge is colored from its list. Thus another idea is needed there to ensure there are colors that can be used to color $E(H)$.

\subsection{Overcoming the difficulties}

First, we prove that (1) actually holds in the context of list coloring as follows. Let us define a function $q(d) = \frac{\log d}{6 \log \log d}.$

\begin{LEM}\label{LEM:PART}
For $0<\e <1$, if $d$ is sufficiently large, then 
for every graph $G$ and every list assignment $L=(L(e):e\in E(G))$ with $|L| \ge \frac{d}{2} (1+\e)$ and $\DLG\le d$, $G$ has a degree two $L$-coloring such that every monochromatic cycle has length at least $q(d)=\frac{\log d}{6 \log \log d}$.
\end{LEM}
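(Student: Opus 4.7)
The plan is to generalize Alon's step~(1) from Section~2.1 to the list setting by performing a joint random partition of both the edges and the color palette. Let $d_0 = (\log d)^5$ and $N = \lceil d/d_0 \rceil$. Independently assign each edge $e \in E(G)$ to a uniform random piece $i(e) \in \{1,\dots,N\}$, and each color $c$ in the universal palette $\bigcup_e L(e)$ to a uniform random palette $j(c) \in \{1,\dots,N\}$. Write $G_i = \{e : i(e) = i\}$, $C_i = \{c : j(c) = i\}$, and $L_i(e) = L(e) \cap C_i$. The intent is to color $G_i$ using only colors in $C_i$: since the palettes are pairwise disjoint, per-piece colorings glue together without conflict, and any monochromatic cycle in the final coloring must lie inside a single $G_i$ and be a subgraph of $G_c = \{e : c \in L(e)\}$ for some $c \in C_i$.

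I would then invoke the Lov\'asz Local Lemma to show that, with positive probability, for every $i$: (a) $\Delta_{G_i}^{L_i} \le d_0(1+o(1))$; (b) $|L_i(e)| \ge (d_0/2)(1+\varepsilon/2)$ for every $e \in G_i$; and (c) there is a deletion set $R_i \subseteq G_i$, of size at most $o(|E(G_i)|)$ and with max color degree $o(d_0)$, such that $G_i \setminus R_i$ contains no cycle of length less than $q(d)$ lying inside $G_c$ for any $c \in C_i$. Conditions (a) and (b) are standard Chernoff-style concentrations around means $d_0$ and $(d_0/2)(1+\varepsilon)$ respectively; condition (c) is driven by bad events ``cycle $C$ lies in $G_i$ and $c \in C_i$'' for each pair $(C,c)$ with $C$ short and $c \in \bigcap_{e\in C} L(e)$, each of which occurs with probability $N \cdot N^{-(\ell+1)} = N^{-\ell}$.

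Granted (a)--(c), Theorem~\ref{THM:copy} with $t=2$ applied to each $G_i \setminus R_i$ with lists $L_i$ yields a degree-$2$ $L$-coloring of $G \setminus R$ (where $R = \bigcup_i R_i$) in which every monochromatic cycle has length at least $q(d)$. The leftover $R$ has small max color degree, and I would color it by pre-reserving a small palette $C_0$ (for instance, each color independently joins $C_0$ with probability $\delta = 1/\log d$, and the main partition is carried out over the remaining colors) and invoking Theorem~\ref{THM:copy} once more on $R$ with lists $L(e) \cap C_0$; the hypothesis $|L| \ge (d/2)(1+\varepsilon)$ supplies enough slack to absorb this reservation.

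The main obstacle is verifying condition~(c). Since $G$ may have unbounded maximum degree, short cycles in $G$ cannot be bounded by a power of $d$ directly. The fix is to count cycles \emph{per color}: any cycle relevant to (c) lies inside some $G_c$, which has maximum degree at most $d$, so cycles of length $\ell$ in $G_c$ through a given edge are bounded by $d^{\ell-2}$. Summing over all colors (using $\sum_c |E(G_c)| = \sum_e |L(e)| \le d \cdot |E(G)|$ after truncating lists to size $\lceil (d/2)(1+\varepsilon) \rceil$) and all $\ell < q(d)$, the expected number of bad (cycle, color) pairs is dominated by $(|E(G)|/d) \cdot (d_0)^{q(d)}$; the choice $q(d) = \log d / (6 \log\log d)$ together with $d_0 = (\log d)^5$ makes this quantity $o(|E(G)|)$, which is precisely what is needed to keep $R$ small enough to both invoke LLL for (c) and handle $R$ separately at the end.
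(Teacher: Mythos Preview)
Your scheme of assigning each color $c$ a single global palette index $j(c)$ breaks the Local Lemma. The event ``$|L_i(e)|$ is large'' depends on the trial $j(c)$ for every $c\in L(e)$, and conversely the single trial $j(c)$ influences this event for \emph{every} edge $e'$ with $c\in L(e')$. The hypothesis $\DLG\le d$ bounds only the number of such $e'$ incident to a fixed vertex; globally $|E(G_c)|$ is unbounded, so the dependency degree in the LLL is unbounded already for conditions~(a) and~(b). The same obstruction hits~(c): the bad event for a short cycle $C\subseteq G_c$ shares the trial $j(c)$ with every other short cycle in $G_c$, and there is no bound in terms of $d$ on how many such cycles there are. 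Your first-moment estimate of $o(|E(G)|)$ bad pairs is a global statement that cannot be fed into an LLL, nor does it control the maximum color degree of the deletion set $R$, which is exactly what you would need to recolor $R$ at the end.

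The paper sidesteps this by randomizing at the (edge,\,color) level rather than at the edge and color levels separately: each pair $(e,c)$ with $c\in L(e)$ is independently kept in a new list $L'(e)$ with probability $p=(\log d)^3/d$. Then the event ``$|L'(e)|$ is large'' involves only trials indexed by $e$ and is independent of the analogous event at every other edge; the color-degree event at $(v,c)$ involves at most $d$ trials; and the short-cycle event $D(C,c)$ depends only on the $|C|$ trials $(f,c)$ for $f\in E(C)$, so $D(C,c)$ and $D(C',c')$ are independent unless $c=c'$ \emph{and} $C,C'$ share an edge. These bounded dependencies let the general LLL absorb the girth constraint directly, with no deletion set and no leftover to recolor; one then simply applies Theorem~\ref{THM:copy} with $t=2$ to $(G,L')$.
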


The proof of Lemma~\ref{LEM:PART} can be found in Section~\ref{SEC:partition}. The key idea in the proof of Lemma~\ref{LEM:PART} is that we have every edge retain each color in its list independently with probability $p = \frac{\log^3 d}{d}$. So instead of partitioning the graph into distinct high girth subgraphs and then using separate colors for each subgraph, we extract a high girth subgraph in each color. In expectation, each edge will have a remaining list $L'$ of about $p|L|$ colors. However, the degrees will remain unchanged. This then is where we need the color degree version of Kahn's theorem, actually its degree 2 version as in Theorem~\ref{THM:copy}, to find a degree 2 $L'$-coloring. To apply Theorem~\ref{THM:copy}, we must show that $\D_G^{L'}$ is at most $pd (1+o(1))$. An intermediate lemma (Lemma~\ref{LEM:LEM4}) can be found in Section~\ref{SEC:partition} showing that such an assignment $L'$ with high girth, large lists and small color degree exists.

We will also find a subgraph $H$ of $G$ as in (2) but before describing that let us discuss how to resolve the issue in (3) for coloring $E(H)$. Here we adopt the idea of \emph{reserve colors} from the proof of Kahn's theorem~\cite{Kahn}, choosing a set of colors $R(e) \subseteq L(e)$ to save to use on $E(H)$. To ensure that colors in $R(e)$ can be used on $E(H)$ without causing any conflicts with the colors used in Lemma~\ref{LEM:PART}, 
we choose $R(v)\subseteq L(v)$ for each vertex $v$ and construct two list assignments to edges, 
$$
R=(R(e)=L(e)\cap R(u) \cap R(v):e=uv \in E(G)),
$$
and
$$
L'=(L'(e)=L(e)\setminus (R(u)\cup R(v)): e=uv \in E(G)).
$$
Note that $R(v) \cap L'(v)=\emptyset$ for every vertex $v$. Now we use $L'$ to color $G$ and $R$ to recolor $H$. Thus we actually apply Lemma~\ref{LEM:PART} to $(G,L')$, not $(G,L)$, to obtain a degree $2$ $L'$-coloring of $E(G)$ where every monochromatic cycle has length at least $q(d)$. 

To apply Lemma~\ref{LEM:PART} to $(G,L')$, $|L'|$ must be sufficiently large. In addition, if $|R|$ is large enough, then there will exist a linear $R$-coloring of $H$, in fact a proper $R$-coloring by Theorem~\ref{THM:DK}. The following lemma (whose proof can be found in Section~\ref{SEC:reserve}) shows that we can choose $R(v) \subseteq L(v)$ for every $v\in V(G)$ so that $|L'|$ and $|R|$ are large.

\begin{LEM}\label{LEM:RES}
For every $0<\e <1$, if $d$ is sufficiently large, then 
for every graph $G$ and every list assignment $L=(L(e): e \in E(G))$ with $|L| \ge \frac{d}{2}(1+\e)$ and $\DLG \le d$, 
there exists $R(v)\subseteq L(v)$ for $v\in V(G)$ such that
\begin{itemize}
\item[(1)] $|R| \ge \frac{d}{\log^{1/2} d}(1+\e)$ and
\item[(2)] $|L'| \ge \frac{d}{2} (1+\frac{\e}{2})$.
\end{itemize}
where $R=(R(e)=L(e)\cap R(v)\cap R(u):e=uv \in E(G))$ and $L'=(L'(e)=L(e)-R(u)-R(v): e=uv \in E(G))$.
\end{LEM}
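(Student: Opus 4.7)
The plan is to construct $R$ by a simple random experiment: fix $p := 2/\log^{1/4} d$, and for each $v \in V(G)$ and each $c \in L(v)$, independently place $c$ into $R(v)$ with probability $p$. For an edge $e = uv$, every $c \in L(e)$ satisfies $c \in L(u) \cap L(v)$, so $\PR[c \in R(e)] = p^2$ and $\PR[c \in L'(e)] = (1-p)^2$, with full independence over $c$ and over $\{u,v\}$. Hence $|R(e)|$ is a sum of $|L(e)|$ i.i.d.\ Bernoulli$(p^2)$ variables and $|L'(e)|$ is a sum of $|L(e)|$ i.i.d.\ Bernoulli$((1-p)^2)$ variables.

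Using the lower bound $|L(e)| \ge d(1+\e)/2$, one gets $\E[|R(e)|] \ge 2d(1+\e)/\log^{1/2} d$, which is exactly twice the target in (1), while $\E[|L'(e)|] \ge (1-2p)\cdot d(1+\e)/2$ exceeds the target $d(1+\e/2)/2$ in (2) by $\Omega(\e d)$ once $d$ is large enough that $p < \e/8$. Chernoff's bound then gives, for each edge $e$,
\[
\PR[|R(e)| < d(1+\e)/\log^{1/2} d] \le \exp(-\Omega(|L(e)|/\log^{1/2} d)),
\]
\[
\PR[|L'(e)| < d(1+\e/2)/2] \le \exp(-\Omega(\min(\e^2 d,\, |L(e)|))),
\]
both at most $\exp(-\Omega(d/\log^{1/2} d))$ since $|L(e)| \ge d(1+\e)/2 \ge \e^2 d$.

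To finish, I would apply the \LL{} to the family of bad events $\{A_e\}_e$. Since $A_e$ depends only on the random choices at $u$ and $v$ for colors in $L(e)$, $A_e$ and $A_{e'}$ are independent unless $e$ and $e'$ share an endpoint and $L(e) \cap L(e') \ne \emptyset$; for fixed $e = uv$, the number of edges $e'$ incident to $u$ with $L(e) \cap L(e') \ne \emptyset$ is at most $\sum_{c \in L(e)} d_G^L(u,c) \le d|L(e)|$, so the dependency degree of $A_e$ is bounded by $2d|L(e)|$. The main obstacle is that the hypothesis $\DLG \le d$ places no bound whatsoever on $|L(e)|$ or on the maximum degree of $G$, so the absolute dependency degree can be arbitrarily large. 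The resolution is that the Chernoff tails scale with $|L(e)|$ (because the expected values do), so the bad probabilities decay exponentially in $|L(e)|$ while the dependency degree grows only linearly; the \LL{} inequality $\PR[A_e] \cdot (\text{dep.\ deg.} + 1) \le 1/e$ then reduces to $|L(e)|/\log^{1/2} d \gtrsim \log(d|L(e)|)$, which holds uniformly for all $|L(e)| \ge d(1+\e)/2$ once $d$ is sufficiently large, since the left side grows linearly and the right only logarithmically in $|L(e)|$. The standard \LL{} conclusion then yields a choice of $R(v)$'s avoiding every $A_e$, which is exactly the reserve assignment required.
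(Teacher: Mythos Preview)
Your random construction, choice of $p=2/\log^{1/4} d$, use of Chernoff, and appeal to the \LL{} are exactly the paper's approach. The one point where you diverge is in handling the (possibly unbounded) list sizes $|L(e)|$: you keep the original lists and try to push the \LL{} through with edge-dependent probabilities and edge-dependent dependency degrees, invoking a ``local'' condition $\PR[A_e]\cdot(\text{dep.\ deg.}+1)\le 1/e$.

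That condition is not a valid form of the \LL{} in general: when the probabilities and dependency degrees vary across events, checking $\PR[A]\cdot(|D_A|+1)\le 1/e$ for each $A$ separately does \emph{not} follow from either the symmetric or the general version without choosing weights $x_A$ and verifying the product inequality, and in your situation the neighbours $A_{e'}$ of $A_e$ may have dependency degrees governed by $|L(e')|$, which you do not control from the side of $e$. A careful asymmetric-\LL{} argument with weights such as $x_{A_e}=1/(4d|L(e)|)$ can in fact be made to work here, but your sketch does not supply it.

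The paper sidesteps this entirely with a one-line reduction you overlooked: since we only need \emph{lower} bounds on $|R(e)|$ and $|L'(e)|$, we may delete colours from each $L(e)$ until $|L(e)|=\ell:=\lceil \tfrac{d}{2}(1+\e)\rceil$ exactly. Then every event has the same Chernoff bound and every dependency degree is uniformly at most $2\ell d$, so the symmetric \LL{} applies directly. This truncation is the missing idea; with it your argument becomes identical to the paper's.
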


Now let us return to how to find $H$ as in step (2). To color $E(H)$ as in step (3), it would suffice to produce such an $H$ with maximum degree $o(d)$. Here, though, we do not use the results on independent transversals since we cannot split the colors into separate groups. Instead, for each monochromatic cycle from step (1) we pick an edge at random from that cycle to add to $H$. This ensures that the edges of $H$ intersect every monochromatic cycle. We can prove that, with some positive probability, the resulting graph has maximum degree $o(d)$. This would suffice to prove Corollary~\ref{COR:main} as we could then properly edge color $H$ using Kahn's Theorem and the list assignment $R$. However, since we are proving Theorem~\ref{THM:main}, that is the color degree version, we need to prove that $\D_H^R$ is $o(d)$ since we may not be able to control the maximum degree of $G$ let alone $H$. Thus we need the following lemma, whose proof can be found in Section~\ref{SEC:pre}.

\begin{LEM}\label{LEM:Subgraph}
For $0<\e <1$, if $d$ is sufficiently large the following holds: Suppose that $G$ is a graph and $R$ is a list assignment of $E(G)$ such that $|R|\ge \frac{d}{\log^{1/2} d} (1+\e)$ and $\D_G^R\le d$. If $\mathcal{C}$ is a set of edge-disjoint cycles with length at least $q(d)=\frac{\log d}{6\log \log d}$, then there exists a subgraph $H$ of $G$ with $\D_H^R \le \frac{d}{\log^{1/2} d}$ such that $E(H)\cap E(C)\neq \emptyset$ for every $C\in \mathcal{C}$.
\end{LEM}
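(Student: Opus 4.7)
To prove Lemma~\ref{LEM:Subgraph}, I would use the probabilistic method together with the Lovász Local Lemma. For each cycle $C \in \mathcal{C}$, independently and uniformly at random select an edge $e_C \in E(C)$, and let $H$ be the subgraph of $G$ with $E(H) = \{e_C : C \in \mathcal{C}\}$. Then $E(H) \cap E(C) \neq \emptyset$ for every $C \in \mathcal{C}$ by construction, so it only remains to show that $\D_H^R \le \tau := d/\log^{1/2} d$ holds with positive probability.

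For each pair $(v, c)$ with $v \in V(G)$ and $c \in R(v)$, let $X_{v, c} := d_H^R(v, c)$ and let $A_{v, c}$ be the bad event $\{X_{v, c} > \tau\}$. Because the cycles in $\mathcal{C}$ are edge-disjoint, the random choices $e_C$ are mutually independent and each edge at $v$ with $c$ in its list lies in at most one cycle; hence $X_{v, c}$ is a sum of independent Bernoulli random variables indexed by the set $\mathcal{C}(v, c) := \{C \in \mathcal{C} : \exists\, e = vw \in E(C),\, c \in R(e)\}$. Each cycle in $\mathcal{C}(v, c)$ contains at most two edges at $v$ and has length at least $q(d)$, and summing the contributions $|E(C)\cap\{e=vw : c \in R(e)\}|/|C|$ using $d_G^R(v, c) \le d$ yields
$$
\E[X_{v, c}] \le \frac{d}{q(d)} = \frac{6\, d\, \log\log d}{\log d} = o(\tau),
$$
since $\log^{1/2} d / \log\log d \to \infty$. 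Chernoff's bound then gives $\PR[A_{v, c}] \le \exp(-\Omega(d/\log^{1/2} d))$.

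To finish, I would apply the Lovász Local Lemma to the family $\{A_{v, c}\}$: since $A_{v, c}$ depends only on the random choices for cycles in $\mathcal{C}(v, c)$, of which there are at most $d_G^R(v, c) \le d$, it is mutually independent of every $A_{u, c'}$ with $\mathcal{C}(u, c') \cap \mathcal{C}(v, c) = \emptyset$. To bound the LLL dependency degree $D$, I would first observe that $A_{v, c}$ is vacuous whenever $d_G^R(v, c) \le \tau$ (since $X_{v, c} \le d_G^R(v, c) \le \tau$ holds deterministically), so one may restrict to the "dangerous" pairs $(v, c)$ with $d_G^R(v, c) > \tau$. Then, for each cycle $C \in \mathcal{C}(v, c)$ and each vertex $u \in V(C)$, one counts dangerous pairs $(u, c')$ with $c' \in R(e)$ for an edge $e$ of $C$ at $u$, using the color-degree bound $\D_G^R \le d$ and the edge-disjointness of $\mathcal{C}$. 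Once $D$ is bounded by (say) a polynomial in $d$, the super-polynomially small probability comfortably satisfies the symmetric LLL condition $4pD \le 1$, and the required $H$ exists.

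The main obstacle is precisely this dependency analysis. Since the lists $R(e)$ are only bounded below by $\tau(1+\e)$ and have no a priori upper bound, a naive count of the events any cycle affects is unbounded, and the argument must be routed through color-degree information rather than through raw list sizes. Possible remedies are to charge each dangerous pair $(u, c')$ to the $> \tau$ edges at $u$ that carry $c'$ in their list and exploit the edge-disjointness of $\mathcal{C}$ to avoid overcounting, or to truncate each $R(e)$ to a subset of size $O(\tau)$ before running the argument and verify the truncation does not spoil the conclusion. Handling this bookkeeping is the technical heart of the proof.
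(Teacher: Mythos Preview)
Your overall plan—pick one edge per cycle at random and apply the Lov\'asz Local Lemma to the events $A_{v,c}=\{d_H^R(v,c)>\tau\}$—matches the paper, and Chernoff in place of the paper's Talagrand is fine here since $X_{v,c}$ really is a sum of independent Bernoullis. The gap is in the dependency analysis, but it is not primarily the list-size issue you isolate: it is a \emph{cycle-length} issue. If $e_C$ is drawn uniformly from all of $E(C)$, then the random trial for $C$ influences $A_{u,c'}$ for every vertex $u\in V(C)$ and every colour $c'$ appearing on an edge of $C$ at $u$. Since cycles in $\mathcal{C}$ may be arbitrarily long, a single trial can be shared by an unbounded number of events, and neither of your proposed remedies caps this: restricting to ``dangerous'' pairs does not help because a long cycle still visits arbitrarily many vertices with no control on how many host a dangerous pair, and truncating $R(e)$ only bounds the colour count per edge, not the number of edges of $C$. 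Thus $D$ is not polynomial in $d$ under your sampling scheme, and the LLL cannot be applied.

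The paper resolves this by first fixing, for each $C\in\mathcal{C}$, an arbitrary subset $S(C)\subseteq E(C)$ of size exactly $q=q(d)$ and then drawing $e_C$ uniformly from $S(C)$ rather than from $E(C)$. The expectation bound $\E[X_{v,c}]\le d/q$ is unaffected, but now each cycle can influence only those $A_{u,c'}$ with $u$ an endpoint of some $e\in S(C)$ and $c'\in R(e)$. Combined with the truncation $|R(e)|\le d$ (your second proposed remedy, which the paper also adopts), this gives at most $2q\cdot d$ dependent events per cycle and hence $|D_{A_{v,c}}|\le 2qd^2+1$, comfortably polynomial in $d$, so the super-polynomially small bad-event probability suffices.
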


One slight technical wrinkle arises in the proof of Lemma~\ref{LEM:Subgraph}. If we choose the edges of $H$ at random from the entirety of each monochromatic cycle, then we are unable to control the dependencies needed to guarantee the $\D_H^R$ is small as the cycles may be arbitrarily long. The trick to resolving said wrinkle is to restrict the choice to an arbitrary subset of each monomochromatic cycle of size exactly $q(d)$. In this way, we can ensure the correct dependencies. In fact, we could even apply this to choose an edge from each part of a partition of each monochromatic cycle into paths of length between $q(d)$ and $2q(d)$. Doing that would ensure that the final coloring has no monochromatic path of length more than $4q(d)$ but we did not do this for the reader's sake.

Now to finish the proof, given $H$ as in Lemma~\ref{LEM:Subgraph} along with $R$ as in Lemma~\ref{LEM:RES}, we can find an $R$-coloring of $H$ using Theorem~\ref{THM:DK}; this combined with the linear $L'$-coloring of $G$ from Lemma~\ref{LEM:PART} now yields a linear $L$-coloring of $G$. 

This then is the outline of the proof. For completeness, though, in the remainder of this section, we restate and prove Theorem~\ref{THM:main}, assuming the validity of Lemmas~\ref{LEM:PART},~\ref{LEM:RES} and~\ref{LEM:Subgraph}, whose proofs can be found in Sections~\ref{SEC:partition},~\ref{SEC:reserve} and~\ref{SEC:pre} respectively.

\MAIN*

\begin{proof}
Since $d$ is sufficiently large, and by assumption $|L|\ge \frac{d}{2}(1+\e)$ and $\DLG \le d$, 
Lemma~\ref{LEM:RES} implies that for every vertex $v$, there exists $R(v)\subseteq L(v)$ such that $|R| \ge \frac{d}{\log^{1/2} d}(1+\e)$ and $|L'| \ge \frac{d}{2} (1+\frac{\e}{2})$. 
Recall that for each edge $e=uv$, $R(e)=L(e) \cap R(v)\cap R(u)$ and $L'(e)=L(e)-R(v)-R(u)$, and that for every vertex $v$ of $G$, $R(v)$ and $L'(v)$ are disjoint. Further note that since $R(e)\subseteq L(e)$ for every $e\in E(G)$, it follows that $\D_G^R \le \D_G^L \le d$. Similarly $\D_G^{L'} \le \D_G^L \le d$.

We consider $G$ and $L'$. Note that $|L'|\ge \frac{d}{2}(1+\frac{\e}{2})$ and $\D_G^{L'} \le d$. Thus, by Lemma~\ref{LEM:PART} applied to $(G,L')$ as $d$ is sufficiently large, there exists a degree two $L'$-coloring $\phi$ of $G$ such that every monochromatic cycle has length at least $q$. Let $\mathcal{C}$ be the set of monochromatic cycles in $\phi$. 
Since $\phi$ is a degree $2$ coloring, every pair of cycles in $\C$ is edge-disjoint. 

Since $G$, $R$ and $\C$ satisfy the conditions of Lemma~\ref{LEM:Subgraph}, there exists by Lemma~\ref{LEM:Subgraph} a subgraph $H$ of $G$ such that $\D_H^R \le \frac{d}{\log^{1/2} d}$ and $E(H)\cap E(C)=\emptyset$ for every $C\in \C$. Since $\D_H^R \le \frac{d}{\log^{1/2} d}$ and $|R|\ge \frac{d}{\log^{1/2} d}(1+\e)$, Theorem~\ref{THM:DK} implies that there exists a proper $R$-coloring $\phi'$ of $H$. As $\phi'$ is proper, $\phi'$ is also a linear $R$-coloring of $H$.

Let $\psi$ be the $L$-coloring such that for each edge $e\in E(G)$, $\psi(e)=\phi'(e)$ if $e\in E(H)$ and $\psi(e)=\phi(e)$ otherwise.  Now we claim that $\psi$ is a linear $L$-coloring of $G$. Suppose not. Then there exists a monochromatic cycle $C$ colored $c$. By the definitions of $\phi$ and $\phi'$, $C$ must contain edges $e$ and $f$ sharing a vertex $v$ such that $e \not \in E(H)$ and $f \in E(H)$. Since $e\not \in E(H)$, it follows that $c=\psi(e)=\phi(e) \in L'(e) \subseteq L'(v)$, and since $f \in E(H)$, it follows that $c =\psi(f)=\phi'(f) \in R(f) \subseteq R(v)$. This yields a contradiction since $R(v)$ and $L'(v)$ are disjoint. 
\end{proof}

\section{Probabilistic Preliminaries}\label{SEC:pre}
Proofs of lemmas in this article largely involve the probabilistic method. In this section, we list some theorems regarding probability theory which are used in the proofs.

For a positive integer $n$ and a real number $0\le p \le 1$, we denote by $B(n,p)$ the binomial distribution with $n$ independent variables and probability $p$. Chernoff's Bound shows that every binomial random variable is close to its expected value with high probability, as follows.

\begin{CB}
For $0\le t < np$, 
$$
\PR\left(|B(n,p)-np|>t\right) <2e^{\frac{-t^2}{3np}}.
$$
\end{CB} 

When a random variable is not binomial, but determined by $n$ independent trials, Talagrand's Inequality is useful to show that such a variable is concentrated around its expected value.

\begin{TI}

Let $X$ be a non-negative random variable, not identically 0, which is determined by $n$ independent trials $T_1,\ldots,T_n$, and satisfying the following for some $c,r>0$:
\begin{itemize}
\item changing the outcome of any one trial can affect $X$ by at most $c$, and 
\item for any $s$, if $X\ge s$ then there is a set of at most $rs$ trials whose outcomes certify that $X\ge s$,
\end{itemize}
then for any $0\le t \le E(X)$, 
$$
\PR(|X-E(X)|>t) \le 2e^{-\frac{\beta t^2}{E(X)}}.
$$
for any $\beta < \frac{1}{8c^2 r}$.
\end{TI}
We also need the following two Lemmas: the Lov\'{a}sz Local Lemma and its generalized version~\cite{LL}. Both are used to show that there is a positive possibility that bad events do not occur.
\begin{LLL}
Let $\mathcal{E}$ be a set of events such that for every event $A\in \mathcal{E}$, $\PR(A)\le p <1$ and there exists a set $D_A$ with $|D_A|\le d+1$ such that $A$ is mutually independent of all events in $\mathcal{E}\setminus D_A$. If $4pd \le 1$ then there is a positive probability that none of the events in $\mathcal{E}$ occur.

\end{LLL}

\begin{WLLL}
Let $\mathcal{E}$ be events in a probability space such that for every $A\in \mathcal{E}$, there exists $D_A$ such that $A$ is mutually independent of all events in $\mathcal{E}\setminus D_A$. If there  exists $(x_A \in [0,1): A\in \mathcal{E})$ such that for each $A\in \mathcal{E}$ the following holds: 
$$
\PR(A) \le x_A \prod_{B \in D_A} (1-x_B),
$$
then there is a positive possibility that none of the events in $\mathcal{E}$ occur.
\end{WLLL}

In the remainder of this section, we prove Lemma~\ref{LEM:Subgraph} about the existence of the subgraph $H$ using the Lov\'{a}sz Local Lemma and Talagrand's Inequality.

\begin{proof}[Proof of Lemma~\ref{LEM:Subgraph}]
Recall that $G$ is a graph, $R$ is a list assignment of $E(G)$ with $|R| \ge \frac{d}{\log^{1/2} d} (1+\e)$ and $\D_G^R \le d$, and $\mathcal{C}$ is a set of edge-disjoint cycles with length at least $q = q(d)  = \frac{\log d}{6\log \log d}$. We may assume that for every $e\in E(G)$,  $|R(e)|\le d$ by removing colors from $L(e)$ if necessary. 

For each $C\in \mathcal{C}$, let $S(C)$ be an arbitrary set of $q$ edges of $C$.
Since every  cycle in $\mathcal{C}$ has length at least $q$, such a set $S(C)$ exists. Note that for distinct $C,C'\in \mathcal{C}$, $S(C)$ and $S(C')$ are disjoint since $C$ and $C'$ are edge-disjoint.

For each $C\in \mathcal{C}$, we randomly choose one edge $e_C$ from $S(C)$ and then let $H$ be a subgraph of $G$ such that $E(H) = \bigcup_{C\in \C} e_C$. 
Observe that for every edge $e$ of $G$, the probability that $e$ belongs to $E(H)$ is $\frac{1}{q}$ if there is a cycle $C\in\mathcal{C}$ with $e \in S(C)$ and $0$ otherwise.
So, for $v\in V(H)$ and $c\in R(v)$, the expected value of $d_H^R(v,c)$ is at most $\frac{d_G^R(v,c)}{q}$. So,  
\begin{equation}
\E(d_H^R(v,c))\le \frac{d_G^R(v,c)}{q} \le \frac{\D_G^R}{q} \le \frac{d}{q}=\frac{6d \log \log d}{\log d}.
\end{equation}
Let $A(v,c)$ be the event that $d_H^R(v,c) >\frac{d}{\log^{1/2} d}$ and $\mathcal{E}=\{A(v,c) \mid v \in V(H), c\in R(v)\}$.
We prove by applying the Lov\'{a}sz Local Lemma that with positive probability none of the events in $\mathcal{E}$ occur.

Let $\C(v,c) = \{ C\in \C: \exists e\in S(C)$ such that $v\sim e$ and $c\in R(e)\}$. Let $D_{A(v,c)} = A(v,c)\cup \{ A(v',c'): \C(v',c')\cap \C(v,c)\ne \emptyset\}$. Note that $A(v,c)$ is mutually independent of all events in $\mathcal{E}\setminus D_{A(v,c)}$. Now $|\C(v,c)| \le \D_G^R \le d$. For each $C\in \C$, there are at most $q$ edges in $S(C)$. For each such edge $e$, there are at most $d$ colors in $R(e)$ by assumption. Since each edge has at most two endpoints, it now follows that for each $C\in \C$,  $|\{A(v',c'): C\in \C(v',c')\}|\le 2qd$. Hence $|D_{A(v,c)}|\le 2qd^2+1$. So, it is enough to prove that $\PR(A(v,c)) \le \frac{1}{4(2qd^2+1)}$.

We use Talagrand's inequality to show that $d_H^R(v,c)$ is highly concentrated, where the random trials are for each $C\in \C$ choosing an edge from $S(C)$ to be in $H$.
Observe that changing the outcome of any one trial changes $d_H^R(v,c)$ at most one, and if $d_H^R(v,c)\ge s$ then there is a set of $s$ trials of which outcomes certify $d_H^R(v,c)\ge s$. 
So, by applying Talagrand's inequality with $c=r=1$ and $t= \E\big(d_H^R(v,c)\big)^{\frac12} \cdot \log d$ to $d_H^R(v,c)$, we have
\begin{gather*}
\PR\left(d_H^R(v,c) > \E\big(d_H^R(v,c)\big)+\E\big(d_H^R(v,c)\big)^{\frac12} \cdot \log d\right) \le 2e^{-\frac{\log^2d }{8}}.
\end{gather*}
By (1), we have

$$
\E\big(d_H^R(v,c)\big)+\E\big(d_H^R(v,c)\big)^{\frac12} \cdot \log d
\le \frac{6d\log\log d}{\log d}+\left(\frac{6d\log\log d}{\log d}\right)^{1/2} \cdot \log d \le \frac{d}{\log^{1/2} d},
$$
where the last inequality holds since $d$ is sufficiently large.
Therefore, we have the following:

\begin{align*}
\PR(A(v,c)) &= \PR\left(d_H^R(v,c) >\frac{d}{\log^{1/2} d}\right) \\
&\le \PR\left(d_H^R(v,c) > \E\big(d_H^R(v,c)\big)+\E\big(d_H^R(v,c)\big)^{\frac12}\cdot \log d\right) \le 2e^{-\frac{\log^2 d}{8}}.
\end{align*}
Since $d$ is sufficiently large, it follows that $2e^{-\frac{\log^2 d}{8}} < \frac{1}{4(2qd^2+1)}$, and so $\PR(A(v,c))\le \frac{1}{4(2qd^2+1)}$ as desired.
\end{proof}

\section{Reserving Colors}\label{SEC:reserve}
In this section, we prove Lemma~\ref{LEM:RES} about finding a reserve color assignment $R$ and a resulting list assignment $L'$ that are large enough.

\begin{proof}[Proof of Lemma~\ref{LEM:RES}]
We may assume that for every $e\in E(G)$,  $|L(e)|= \ell=\Lceil \frac{d}{2}(1+\e)\Rceil$ by removing colors from $L(e)$ if necessary. 

For each vertex $v$ and each color $c \in L(v)$, we place $c$ into $R(v)$ with probability $p=\frac{2}{\log^{1/4} d}$. 
For each $e\in E(G)$, let $A_e$ be the event that $|R(e)| < \frac{d}{\log^{1/2} d}(1+\e)$ and let $B_e$ be the event that $|L'(e)| < \frac{d}{2}(1+\frac{\e}{2})$. Let $\mathcal{E}=\{A_e : e \in E(G)\}\cup \{B_e : e\in E(G)\}$. 

We prove that with positive probability none of the events in $\mathcal{E}$ occur. To do this, we apply the \LL. 
For $e=uv \in E(G)$, let 
$$D(e) = \{ f\sim e: L(f)\cap L(e)\ne \emptyset\}.$$ 
Now for each $e\in E(G)$, let 
$$D_{A_e} = D_{B_e} = \{A_f: f\in D(e)\} \cup \{B_f: f\in D(e)\}.$$
Then, $A_e$ and $B_e$ are each mutually independent of all events in $\mathcal{E}\setminus D_{A_e}=\mathcal{E}\setminus D_{B_e}$. Since 
$$
|D(e)| \le \sum_{c\in L(e)} \big(d_G^L(v,c)+d_G^L(u,c)\big) \le 2\ell d={d^2}(1+\e)+2,
$$ it is enough to prove that $\PR(A_e)$ and $\PR(B_e)$ are at most $\frac{1}{8(d^2(1+\e)+2)}$.

Observe that $|R(e)|  \sim B(\ell,p^2)$ and hence the expected value of $|R(e)|$ is $\ell p^2$. Thus, by applying Chernoff's Bound with $t=\E\big(|R(e)|\big)^{\frac12}\cdot \log d$ we have
\begin{equation*}
\PR\left( |R(e)| < \ell p^2 - \ell^{\frac12} \cdot p \cdot \log d\right)<2 e^{-\frac{\log^2{d}}{3}}.
\end{equation*}
Since $d$ is sufficiently large, we have $\frac{d}{\log^{1/2} d}(1+\e)> \big(2d(1+\e)+4\big)^{\frac12} \cdot \log^{3/4}d$. So,
\begin{equation*}
\ell p^2 - \ell^{\frac12} \cdot p \cdot \log d  \ge \frac{2d(1+\e)}{\log^{1/2} d} - \big(2d(1+\e)+4\big)^{\frac12} \cdot \log^{3/4}d
> \frac{d}{\log^{1/2} d}(1+\e).	
\end{equation*}
Thus,
\begin{align*}
 \PR(A_e)&=\PR\left(|R(e)| < \frac{d}{\log^{1/2} d}(1+\e)\right) \le \PR\left( |R(e)| < \ell p^2 - \ell^{\frac12} \cdot p \cdot \log d\right)\\
 &<2 e^{-\frac{\log^2{d}}{3}}< \frac{1}{8(d^2(1+\e)+2)},
\end{align*}
where the last inequality holds since $d$ is sufficiently large.

Similarly $|L'(e)| \sim B(\ell,(1-p)^2)$ and the expected value of $|L'(e)|$ is $\ell (p-1)^2$. By applying Chernoff's Bound with $t=\E\big(|L'(e)|\big)^{\frac12} \cdot \log d$, we have
\begin{equation*}
\PR\left(|L'(e)| < \ell(1-p)^2 - \ell^{\frac12} \cdot (1-p) \cdot \log d\right) <2 e^{-\frac{\log^2{d}}{3}}.
\end{equation*}
Once again, since $d$ is sufficiently large, we have
\begin{align*}
\ell(1-p)^2 - \ell^{\frac12}  \cdot (1-p) \cdot \log d & > \frac{d}{2}\left(1+\frac{\e}{2}\right) +\frac{\e d}{4} -\frac{2d(1+\e)+4}{\log^{1/4} d} -\left(\frac{d}{2}(1+\e)+1\right)^{\frac12}\log d\\
									&> \frac{d}{2}\left(1+\frac{\e}{2}\right),
\end{align*}
and so 
\begin{align*}
\PR(B_e)&=\PR\left(|L'(e)| < \frac{d}{2} \left(1+\frac{\e}{2}\right)\right)\le \PR\left(|L'(e)| < \ell(1-p)^2 - \ell^{\frac12} \cdot (1-p) \cdot \log d\right)\\
&<2 e^{-\frac{\log^2{d}}{3}}< \frac{1}{8(d^2(1+\e)+2)},
\end{align*}
where again the last inequality holds since $d$ is sufficiently large. 
Therefore by the \LL, with positive probability none of the events in $\mathcal{E}$ occur.
\end{proof}

\section{Partitioning $G$ into subgraphs with maximum degree two}\label{SEC:partition}
In this section, we prove Lemma~\ref{LEM:PART} about finding a degree $2$ $L$-coloring such that every monochromatic cycle has length at least $q(d) = \frac{\log d}{6 \log \log d}$. To do that, we use the following lemma.

\begin{LEM}\label{LEM:LEM4}
For every $0<\e<1$, if $d$ is sufficiently large, then for every graph $G$ and every list assignment $L=(L(e): e\in E(G))$ with $|L|\ge \frac{d}{2}(1+\e)$ and $\D_G^L\le d$, there exists a list assignment $L'$ such that
 \begin{itemize}
\item $L'(e) \subseteq L(e)$, 
\item $|L'| \ge (1+\frac{\e}{2}) \frac{\log^3{d}}{2}$, 
\item $\D^{L'}_G\le \log^{3}d +\log^{\frac52} d$, and 
\item for each color $c \in \bigcup_{e\in E(G)}L(e)$,  the subgraph $G_c$ of $G$ induced by $\{e\in E(G)\mid c \in L'(e)\}$ has girth at least $q(d)=\frac{\log d}{6 \log \log d}$.
\end{itemize}
\end{LEM}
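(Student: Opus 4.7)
\medskip
\noindent\textbf{Proof plan.} The plan is to perform a one-shot random sparsification of $L$, followed by a small cleanup that removes a few colors to destroy short monochromatic cycles. Set $p = \frac{\log^3 d}{d}$ and, independently for every pair $(e,c)$ with $c \in L(e)$, place $c$ into a preliminary list $L^*(e)$ with probability $p$. Then $|L^*(e)|$ is a binomial with mean $p|L(e)| \ge \frac{\log^3 d}{2}(1+\e)$, and for each $(v,c)$, $d_G^{L^*}(v,c)$ is a binomial with mean at most $pd = \log^3 d$. Writing $G_c^* = \{f \in E(G) : c \in L^*(f)\}$ for each color $c$, I will then define $L'$ by, for each $c$ and each cycle $C$ of $G_c^*$ of length less than $q(d)$, removing $c$ from $L^*(f)$ for one arbitrarily chosen edge $f \in E(C)$.

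To carry this out without spoiling the other bounds, I will apply the \LL\ to three families of bad events: $A_e := \{|L^*(e)| < p|L(e)| - \sqrt{p|L(e)|}\log d\}$; $B_{v,c} := \{d_G^{L^*}(v,c) > pd + \sqrt{pd}\log d\}$; and $D_e := \{\text{more than } \log d \text{ colors } c \in L^*(e) \text{ have a cycle of length } {<}\, q(d) \text{ in } G_c^* \text{ through } e\}$. The events $A_e$ and $B_{v,c}$ are binomial tails, so Chernoff's Bound gives $\PR(A_e), \PR(B_{v,c}) \le 2e^{-\log^2 d / 3}$. For $D_e$, the crucial observation is that the indicators
\[
X_c := \mathbf{1}\bigl[c \in L^*(e)\ \text{and}\ \exists\text{ short cycle of } G_c^* \text{ through } e\bigr]
\]
are \emph{independent across $c$}, because each $X_c$ depends only on the coordinates $\{\mathbf{1}[c \in L^*(f)] : f \in E(G)\}$ attached to the single color $c$. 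Since $G_c$ has maximum degree at most $\DLG \le d$, the number of $u$-$v$ paths of length $k-1$ in $G_c$ (for $e=uv$) is at most $d^{k-2}$, so $\E[X_c] \le p \cdot \sum_{k=3}^{q(d)-1} d^{k-2} p^{k-1}$. A geometric sum together with the identity $(pd)^{q(d)} = \log^{3q(d)} d = d^{1/2}$ bounds this by $O\bigl(1/(d^{3/2}\log^3 d)\bigr)$, so $\E\bigl[\sum_c X_c\bigr] = O\bigl(1/(d^{1/2}\log^3 d)\bigr) = o(1)$. A standard Chernoff tail for sums of independent $\{0,1\}$-indicators with this tiny mean then yields $\PR[D_e] \le e^{-\Omega(\log^2 d)}$.

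Each bad event is mutually independent of all events not sharing one of its coordinates $(f,c')$, and since a short cycle through $e$ colored $c$ involves only edges within graph-distance $q(d)$ of $e$ in the subgraph $G_c$ (maximum degree $\le d$), the dependency degree is bounded by $d^{O(q(d))} = \exp(O(\log^2 d / \log \log d))$. This is super-polynomial in $d$, but the product with the probability bound is $\exp\bigl(-\Omega(\log^2 d) + O(\log^2 d/\log\log d)\bigr) = o(1)$, so the \LL\ applies. Given an outcome in which no bad event occurs, define $L'$ by the cleanup described above: the girth property holds by construction, and since $D_e$ fails, the cleanup removes at most $\log d$ colors from each $L^*(e)$, which is absorbed into the buffer $\sqrt{pd}\log d = \log^{5/2} d$ for both $|L'(e)|$ and $d_G^{L'}(v,c)$, verifying the four targeted properties.

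The main obstacle will be the bookkeeping of the LLL dependency graph: since short cycles have length up to $q(d)$, the dependency neighborhoods in each $G_c$ can be super-polynomial in size, and one must check that the $e^{-\Omega(\log^2 d)}$ tail still dominates. The choices $p = \log^3 d/d$ and $q(d) = \frac{\log d}{6\log\log d}$ are calibrated precisely so that $(pd)^{q(d)} = \sqrt{d}$, which is what makes $\E[\sum_c X_c]$ tend to zero and the LLL estimate balance; if $q(d)$ or $p$ were substantially larger, the dependency blowup would overwhelm the tail, while if either were smaller the target list sizes would be violated.
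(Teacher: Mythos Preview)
Your approach is correct and genuinely different from the paper's. Both proofs begin with the same random sparsification (retain each $(e,c)$ with probability $p=\log^3 d/d$) and the same binomial events $A_e,B_{v,c}$, but they diverge on how to enforce high girth.

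The paper works directly: for every short cycle $C$ and color $c$ it introduces a separate bad event $D(C,c)=\{c\in L'(f)\text{ for all }f\in E(C)\}$ with probability $p^{|C|}$, and then applies the \emph{general} (weighted) Lov\'asz Local Lemma with weights $x_{A_e}=x_{B(v,c)}=d^{-q}$ and $x_{D(C,c)}=d^{-(|C|-1)}$. The calculation is delicate precisely because the $D(C,c)$ events have probabilities ranging over many orders of magnitude as $|C|$ varies, and the dependency sets must be stratified by cycle length; this is why the asymmetric LLL is needed.

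You instead aggregate the cycle information at each edge into a single event $D_e$, exploiting the key observation that the indicators $X_c$ are independent across colors (since they live on disjoint coordinate sets). This lets a multiplicative Chernoff bound of the form $\PR[\sum X_c\ge t]\le(e\mu/t)^t$ push $\PR[D_e]$ down to $e^{-\Omega(\log^2 d)}$, matching the tails of $A_e$ and $B_{v,c}$. With all bad events having uniformly tiny probability, the \emph{symmetric} LLL suffices despite the super-polynomial dependency degree $d^{O(q(d))}=\exp(O(\log^2 d/\log\log d))$. The price you pay is a deterministic cleanup afterwards, but since $L'\subseteq L^*$ you correctly note that short cycles cannot be created, the color-degree bound only improves, and the at-most-$\log d$ colors lost per edge are absorbed by the $\e/4$ buffer in the list-size target.

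In short: the paper trades a cleanup step for the weighted LLL; you trade the weighted LLL for a cleanup step plus the color-wise independence observation. Both are sound, and your route is arguably more elementary in its LLL usage.
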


Let us prove Lemma~\ref{LEM:PART} assuming Lemma~\ref{LEM:LEM4} before proving Lemma~\ref{LEM:LEM4}.

\begin{proof}[Proof of Lemma~\ref{LEM:PART}]
Since $|L|$ is sufficiently large, there exists a list assignment $L'$ satisfying the conditions in Lemma~\ref{LEM:LEM4}.
In addition, since $d$ is sufficiently large, by Theorem~\ref{THM:copy} there exists a degree $2$ $L'$-coloring $\psi$ of $G$. Since for each color $c\in \bigcup_{e\in E(G)}L(e)$, the subgraph $G_c$ induced by $\{e\in E(G)\mid c \in L'(e)\}$ has girth at least $q(d)=\frac{\log d}{6 \log \log d}$, it follows that every monochromatic cycle in $\psi$ has length at least $q(d)$ as desired.
\end{proof}

Now it remains to prove Lemma~\ref{LEM:LEM4}.
\begin{proof}[Proof of Lemma~\ref{LEM:LEM4}]
We may assume for every edge $e$ that $|L(e)|=\ell = \Lceil \frac{d}{2}(1+\e) \Rceil$ by removing colors from $L(e)$ if necessary.

For an edge $e$ and a color $c\in L(e)$, we place $c$ into $L'(e)$ with probability $p=\frac{\log^3 d}{d}$. Let $\mathcal{C}$ be the set of cycles of length less than $\frac{\log d}{6 \log \log d}$.
For $e\in E(G)$, $v\in V(G)$, $c\in \bigcup_{e\in E(G)}L(e)$ and  $C \in \mathcal{C}$, let
\begin{itemize}
\item $A(e)$ be the event that $|L'(e)| < \left(1+\frac{\e}{2}\right)\frac{\log^{3}d}{2}$, and
\item $B(v,c)$ the event that $d_G^{L'}(v,c)>\log^{3}d +\log^{\frac52}d$, and
\item $D(C,c)$ the event that  for every edge $f$ of $C$, $L'(f)$ contains $c$.
\end{itemize}
Let 
$$
\mathcal{E}=\{A(e): e\in E(G)\} \cup \{B(v,c): v\in V(G), c\in L'(v)\} \cup \{D(C,c): c\in \bigcup_{f\in E(G)}L(f), C \in \mathcal{C}\}.
$$
We prove that with positive probability none of the events in $\mathcal{E}$ occur. To show this, we use the general version of the \LL.

Observe that $|L'(e)| \sim B(\ell,p)$. Thus the expected value of $|L'(e)|$ is $p\ell$. By applying Chernoff's bound with $t=(p\ell)^{\frac12}\cdot\log d$ to $|L'(e)|$, we obtain

\begin{eqnarray}
\PR\left(|L'(e)| < p\ell - (p\ell)^{\frac12}\cdot\log d\right) &< 2e^{-\frac{\log^2 d}{3}}\label{2}.
\end{eqnarray}
Yet
\begin{align*}
 p\ell - (p\ell)^{\frac12}\cdot\log d&\ge \frac{\log^{3}d}{2}\left(1+\frac{\e}{2}\right) +\frac{\e\log^{3}d}{4} -\left(\frac{1+\e}{2}+\frac1d\right)^{1/2}\cdot \log^{\frac52}d\\
 							&>\frac{\log^{3}d}{2}\left(1+\frac{\e}{2}\right).
 							\end{align*}
Combining this with (\ref{2}), we have 
\begin{align*}
\PR(A(e))&=\PR\left(|L'(e)| <\left(1+\frac{\e}{2}\right) \frac{\log^3 d}{2}\right) \\
&\le \PR\left(|L'(e)| < p\ell - (p\ell)^{1/2}\cdot \log d\right) < 2e^{-\frac{\log^2 d}{3}}.
\end{align*}

Similarly $d^{L'}_G(v,c)\sim B(d^{L}_G (v,c),p)$ and hence the expected value of $d^{L'}_G(v,c)$ is $pd^{L}_G (v,c)$. By applying Chernoff's bound with $t=\big(p d_G^{L}(v,c)\big)^{\frac12}\cdot \log d$ to $d^{L'}_G(v,c)$, we obtain 
\begin{eqnarray}
\PR\left(d_G^{L'}(v,c) > pd_G^L(v,c)+\big(p d_G^{L}(v,c)\big)^{\frac12}\cdot \log d\right) &< 2e^{-\frac{\log^2 d}{3}}\label{3}.
\end{eqnarray}
Yet 
$$pd_G^L(v,c)+\big(p d_G^{L}(v,c)\big)^{\frac12}\cdot \log d \le \log^3 d+\log^{\frac52} d.$$
Combining this with (\ref{3}), we have
\begin{align*}
\PR(B(v,c))&=\PR\left(d_{G}^{L'} (v,c) >\log^3 d +\log^{\frac52}d\right) \\
&<  2e^{-\frac{\log^2 d}{3}}.
\end{align*}
We also easily obtain that 
$
\PR(D(C,c)) \le p^{|C|},
$
since the events that $(c\in L'(f): f\in E(C))$ are independent and occur with probability $p$.

Let $q = q(d)=\frac{\log d}{6\log \log d}$.
To apply the general version of the \LL, we define 
\begin{itemize}
\item $x_{A_e}= \frac{1}{d^q}$, 
\item $x_{B(v,c)}=\frac{1}{d^q}$, 
\item $x_{D(C,c)}=\frac{1}{d^{|C|-1}}$.
\end{itemize}
Furthermore let
\begin{itemize}
\item $D_{A_e} = \{B(v,c): v\sim e, c\in L(e)\} \cup \{D(C,c): c\in L(e), e\in E(C)\}$,
\item $D_{B(v,c)} = \{A_e: e\sim v, c\in L(e)\} \cup \{B(u,c): u\in N(v)\} \cup \{D(C,c): v\in V(C)\}$,
\item $D_{D(C,c)} = \{A_e: e\in E(C), c\in L(e)\} \cup \{B(v,c): v\in V(C)\} \cup \{D(C',c): E(C)\cap E(C')\ne \emptyset\}$.
\end{itemize}
Note that for each $F\in \mathcal{E}$, $F$ is mutually independent of all events in $\mathcal{E}\setminus D_{F}$ where $D_{F}$ is defined as above. Therefore, with positive probability none of the events in $\mathcal{E}$ occur provided that the three following inequalities hold:
\begin{gather}
\PR(A_e)<2e^{-\frac{\log^2 d}{3}
}< \frac{1}{d^q} \cdot \left(1-\frac{1}{d^q}\right)^{d(1+\e)+2 } \cdot \prod_{r=3}^{q} \left(1-\frac{1}{d^{r-1}}\right)^{\ell d^{r-2}}\label{4}\\
\PR(B(v,c))<2e^{-\frac{\log^2 d}{3}}< \frac{1}{d^q}\cdot \left(1-\frac{1}{d^q}\right)^{2d} \cdot \prod_{r=3}^q \left(1-\frac{1}{d^{r-1}}\right)^{d^{r-1}}\label{5}\\
\PR(D(C,c))\le    \left(\frac{\log^3 d}{d}\right)^{|C|} <\frac{1}{d^{|C|-1}} \cdot \left(1-\frac{1}{d^q}\right)^{2|C|} \cdot \prod_{r=3}^q 
\left(1-\frac{1}{d^{r-1}}\right)^{|C|d^{r-2}}\label{6}
\end{gather}

Since $\e <1$ and $d$ is sufficiently large, we have that $\ell < d$ and it now follows that (\ref{5}) implies (\ref{4}). To prove (\ref{5}) and (\ref{6}), we use the following two inequalities.
\begin{itemize}
\item for positive integers $a$ and $b$, $\left(1-\frac{1}{a}\right)^b \ge 1- \frac{b}{a}$
\item for sufficiently large positive integers $n$, $\left(1-\frac{1}{n} \right)^n > \frac{1}{3}$
\end{itemize}
Since $d$ is sufficiently large, we have

\begin{align*}
\frac{1}{d^q} \cdot \left(1-\frac{1}{d^q}\right)^{2d } \cdot \prod_{r=3}^{q} \left(1-\frac{1}{d^{r-1}}\right)^{d^{r-1}}&
> \frac{1}{d^q} \cdot \left(1-\frac{2}{d^{q-1}}\right) \cdot \prod_{r=3}^{q} \frac13\\
&>\frac{1}{d^q} \cdot \frac{1}{d} \cdot \frac{1}{d^{q}}
=d^{-\frac{\log d}{3\log\log d}-1}\\
&>d^{-\frac{\log d}{3}+1}
>2 e^{-\frac{\log^2 d}{3}}.
\end{align*}
So, (\ref{5}) holds.

To prove that (\ref{6}) holds, let $k=|C|$. Since $k\le q$ and $d$ is sufficiently large, we have
\begin{align*}
\frac{1}{d^{k-1}} \cdot \left(1-\frac{1}{d^q}\right)^{2k} \cdot \prod_{r=3}^q 
\left(1-\frac{1}{d^{r-1}}\right)^{kd^{r-2}}
&>\frac{1}{d^{k-1}} \cdot \left( 1-\frac{2k}{d^q}\right) \cdot \prod_{r=3}^q \left(1-\frac{k}{d}\right)\\
&>\frac{1}{d^{k-1}} \cdot \left(1-\frac{2q}{d^q}\right) \cdot \left(1-\frac{q}{d}\right)^q
\\ 
&>\frac{1}{d^{k-1}}\cdot \left(1-\frac{2q}{d}\right) \cdot \left(1-\frac{q^2}{d}\right)
\\
&> \frac{1}{d^{k-1}}\cdot \frac{1}{d^{\frac12}}=\frac{1}{d^{k-\frac12}} \\
&\ge \frac{1}{d^{k-\frac{k}{2q}}}=\left(\frac{\log^3 d}{d}\right)^k,
\end{align*}
and we obtain the inequality (\ref{6}), where the last equality holds since
$$
\log^3 d =d^{\frac{3\log \log d}{\log d}}.
$$
Therefore, by the general version of the \LL, with positive probability none of the events in $\mathcal{E}$ occur, and so there exists a list assignment $L'$ satisfying the conditions in the statement.
\end{proof}

\bibliographystyle{plain}
\bibliography{ref}

\end{document}